\documentclass[11pt]{article}

\usepackage[margin = 1in]{geometry}
\usepackage{setspace}

\usepackage[T1]{fontenc}
\usepackage{charter} 
\usepackage{float}
\usepackage{soul} 
\usepackage{bbm} 
\usepackage{verbatim}
\usepackage{amsmath}
\usepackage{amsthm}
\usepackage{amssymb}
\usepackage{textcomp}
\usepackage{graphicx}
\usepackage [english]{babel}
\usepackage{tikz}
\usepackage{dblfloatfix}

\usepackage{enumerate}

\usepackage[
backend=biber,
maxbibnames=99,
style=authoryear-comp,
sorting=nyt
]{biblatex}
\addbibresource{ref.bib}
\usepackage{doi}

\allowdisplaybreaks 

\usepackage{algorithm}
\usepackage{algpseudocode}

\newtheorem{theorem}{Theorem}[]

\newtheorem{lemma}[theorem]{Lemma}

\newcommand{\1}{\mathbbm{1}} 
\newcommand{\Prob}{\mathbbm{P}} 
\newcommand{\prob}{\mathbbm{P}} 
\newcommand{\E}{\mathbbm{E}} 
\newcommand{\ex}{\mathbbm{E}} 
\newcommand{\N}{\mathbbm{N}} 
\newcommand{\R}{\mathbbm{R}} 
\newcommand{\id}{\ensuremath{\operatorname{Id}}} 
\newcommand{\lcm}{\ensuremath{\operatorname{lcm}}} 
\newcommand{\bin}{\ensuremath{\operatorname{binomial}}} 

\title{\bf{An Algorithm to Recover Shredded Random Matrices}}
\author{
\bf{Caelan Atamanchuk}%
\footnote{\textit{McGill University}: {caelan.atamanchuk@gmail.com}}
\and
\bf{Luc Devroye}%
\footnote{\textit{McGill University}: {lucdevroye@gmail.com}}
\and
\bf{Massimo Vicenzo}%
\footnote{\textit{University of Waterloo}: {mvicenzo@uwaterloo.ca}}
}

\begin{document}

\setstretch{1.1}

\maketitle

\begin{abstract}
Given some binary matrix $M$, suppose we are presented with the collection of its rows and columns in independent arbitrary orderings. From this information, are we able to recover the unique original orderings and matrix? We present an algorithm that identifies whether there is a unique ordering associated with a set of rows and columns, and outputs either the unique correct orderings for the rows and columns, or the full collection of all valid orderings and valid matrices. We show that there is a constant $c > 0$ such that the algorithm terminates in $O(n^2)$ time with high probability and in expectation for random $n \times n$ binary matrices with i.i.d.\ entries $(m_{ij})_{ij=1}^n$ such that $\prob(m_{ij} = 1) = p$ and $\frac{c\log^2(n)}{n(\log\log(n))^2} \leq p \leq \frac{1}{2}$.
\end{abstract}

\section{Introduction}

In this work we study the problem of reconstructing a binary matrix after being "shredded". That is, we aim to explain when and how a matrix (in our case, drawn from a random model) can be uniquely reconstructed from just the information contained in the rows and columns without knowing how they are ordered. To give the setup more precisely, let $M = (m_{ij})_{i,j=1}^n$ be a $n \times n$ binary matrix with the rows and columns given labels in $[n] = \{1,...,n\}$ and let $\mathcal{C}(M) = \{\gamma_1 , ... , \gamma_n\}, \mathcal{R}(M) = \{\rho_1 , ... , \rho_n\}$ be the multisets of all the columns and rows of $M$ with some arbitrary ordering that is not necessarily that which they belong in, which we call these collections the shredded columns and rows. We say that $M$ is uniquely reconstructible (or just reconstructible) if there exists two unique permutations $\sigma = (\sigma_1,...,\sigma_n) $ and $\tau = (\tau_1,...,\tau_n)$ of $[n]$ such that 
\begin{equation}\label{eq:recon} 
\begin{bmatrix}
    \rho_{\sigma_1} \\ \vdots \\ \rho_{\sigma_n}
\end{bmatrix}
= 
\begin{bmatrix}
\gamma_{\tau_1} \ \cdots \ \gamma_{\tau_n}    
\end{bmatrix} .
\end{equation}
In particular, if a unique solution exists then both of the resulting matrices are equal to $M$ (there is always at least one solution equal to $M$, that being the correct, original ordering). If there are at least two pairs of permutations that satisfy (\ref{eq:recon}), then the matrix $M$ is not reconstructible and the collection of all pairs of permutations that satisfy the identities are the potential reconstructions of the original matrix. For example, the matrix that has every entry set to $0$ is not  uniquely reconstructible and has $(n!)^2$ solutions to (\ref{eq:recon}), even though each of the solutions corresponds to the same matrix. In fact, every matrix that has two equal rows (or columns) is not be reconstructible. Suppose that rows $\rho_i$ and $\rho_j$ are equal, $(\sigma,\tau)$ are a pair of permutations that satisfy (\ref{eq:recon}), and $\lambda$ is the transposition $(ij)$. Then, the pair $(\lambda\circ\sigma,\tau)$ also satisfies (\ref{eq:recon}) and so the matrix is not reconstructible.

To offer an analogy, we can view $M$ as a square binary picture. The problem is that of rebuilding the picture given the strips that come out after we send one copy of $M$ through a paper shredder upright and one copy sideways. If two strips are exactly the same, we do not know which spots to place the two strips, and we conclude that the picture is not reconstructible. However, for the algorithm, this notion of reconstructibility is not of much importance as all potential reconstructions are outputted.

In this work, we consider a matrix $M$ that has i.i.d.\ entries $m_{ij}$ with $\Prob(m_{i,j} = 1) = p$ and $\Prob(m_{i,j}=0) = 1-p$ for some $p$ that we view as a function of $n$. Since the 1's and 0's are essentially just labels in our model, there is a natural symmetry around $p=\frac{1}{2}$, and thus we assume throughout that $p \leq \frac{1}{2}$ for our analysis. 

When $p \geq \frac{(1+\epsilon)\log(n)}{n}$ for some $\epsilon > 0$ the matrix $M$ has pairwise distinct rows and columns with high probability (see Lemma \ref{duplicate}). Hence, for $p$ above that threshold, our definition of reconstructibility where matrices that have equal rows or columns are not reconstructible is with high probability equivalent to the simplified version where a matrix $M$ is reconstructible if 

$$\forall M', \ \bigg[\mathcal{C}(M) = \mathcal{C}(M') \ \text{and} \ \mathcal{R}(M) = \mathcal{R}(M') \implies M = M' \bigg] .$$

If $M$ is viewed as the adjacency matrix for some random directed graph on $n$ vertices (one with loops allowed), the columns $\gamma_1,...,\gamma_n$ represent the collection of all 1-in-neighbourhoods with only the central vertex's labelled removed and $\rho_1,...,\rho_n$ represent the collection of all 1-out-neighbourhoods with only the central vertex's labelled removed (removing the labels is the same as permuting them into some arbitrary labelling). By 1-out and 1-in neighbourhood of a vertex $v$, we mean the subgraph that contains all edges out of or into $v$ and the other vertices these edges are incident to. This observation links the work being done here to much of the related work and motivations in section 2.

The paper is organized as follows: In section 2 we discuss related work and some of the motivations behind work in the area of reconstruction problems. In section 3 we present the reconstruction algorithm along with our main result, and in Section 4 we prove the result. In section 5 we offer and prove a result concerning when matrices can be reconstructed. Finally, section 6 houses proofs for the lemmas that are used in the preceding sections.

\section{Related Work and Motivation}

Combinatorial reconstruction problems arise naturally in a number of pure and applied settings. The largest inspiration for such exploration comes from the reconstruction conjecture in combinatorics (see \textcite{Ulam1960,Kelly1957,Haray1974,Haray1985}): any graph $G$ on at least three vertices is reconstructible from the multiset of isomorphism classes of all the vertex-deleted subgraphs of $G$, often called the deck or $G$ and labelled $D(G)$ (the vertex deleted subgraphs of $G$ are all the induced subgraph obtained through deleting exactly one of the vertices of $G$). To be more exact, the conjecture states that for all graphs $G$ and $H$ on at least three vertices, $G$ is isomorphic to $H$ if and only if $D(G) = D(H)$. The use of random models has been vital in the study of this conjecture, with one important result coming from \textcite{Bollobas1990} who proved that as $n \to \infty$, an Erd\H{o}s-Reny\'i random graph with $\frac{c\log(n)}{n} \leq p \leq 1 - \frac{c\log(n)}{n}$ is uniquely reconstructible from a collection of only three of the vertex deleted subgraphs for any $c > \frac{5}{2}$. In particular, this means that with high probability, for appropriate choices of $p$, there is a subset $\{G_1,G_2,G_3\} \subseteq D(G)$ of three subgraphs such that for any other graph $H$, if $\{G_1,G_2,G_3\} \subseteq D(H)$, then $H$ is isomorphic to $G$. Before the Bollob\'as result, \textcite{Muller1976} had previously explored the reconstructibility of random graphs from the whole deck.

One interesting abstraction of the reconstruction conjecture related to the random pictures model is the new digraph reconstruction conjecture. Let $G$ and $H$ be two directed graphs and suppose that there is a bijection $f:V(G) \to V(H)$ such that $G \setminus v$ is isomorphic to $H \setminus f(v)$ for all $v \in V(G)$. Further suppose that the in-degrees and out-degrees of $v$ and $f(v)$ match for all $v \in V(G)$. Then, $G$ and $H$ must be isomorphic. The answer to this problem remains open. See \textcite{Ramachandran2004,Ramachandran1981} for discussion of the problem and the families of graphs for which the conjecture has been proven to be true.

Recently, extensive work has gone into studying the shotgun assembly problem for graphs. Introduced by \textcite{Mossel2019}, the problem asks how large must $r$ be so that a graph, commonly drawn from some random model, is uniquely determined by its collection of distance $r$-neighbourhoods around each vertex $v \in V(G)$ (by a distance $r$-neighbourhood of $v$ we mean the subgraph $N_r(v)$ that is induced by all vertices of graph distance at most $r$ from $v$). They consider both labelled and unlabelled versions of the problem. This topic has been studied for a variety of random models including Erd\H{o}s-Reny\'i graphs, random regular graphs, and simplicial complexes (for examples, see \textcite{Ding2022,Adhikari2022,Gaudio2022,Johnston2023,Huang2022,Mossel2015}). There has also been work put towards shotgun assembly problems in different contexts such as reconstructing random vertex colourings from $r$-neighbourhoods as seen in \textcite{Pryzykucki2022,Mossel2019,Liu2022}. 

In a similar vein, there is the problem of canonically labelling graphs and random graphs, and its main application in checking graph isomorphisms (early work in the topic can be seen in \textcite{Babai1980, Babai1983, BabaiErdos1980}). An algorithm which canonically labels a graph $G$, assigns the labels $1,2,\dots, n$ to the $n$ vertices of $G$ such that if $G$ is isomorphic to some graph $H$, then both should be given the same labelling by the algorithm. Of particular note to us are the results on canonically labelling the Erd\H{o}s-Reny\'i graph using only the $r$-neighbourhoods of each vertex. \textcite{Mossel2019} showed it is possible to canonically label a graph $G \sim G(n,p_n)$ when $np = \omega(\log^2(n))$ with using only 2 neighbourhoods. On the other hand, \textcite{gaudio2022local} showed for $np = o(\log^2(n)/(\log\log(n))^3)$ there are multiple isomorphic 2-neighbourhoods with high probability, which inhibits us from creating a canonical labelling. 

Another model that has received some attention is that of reconstructing random jigsaw puzzles. Once again introduced by \textcite{Mossel2019}, in this problem we are given the collection of vertices in a lattice with coloured half-edges drawn from some collection of $q$ colours. The problem asks how large $q$ must be so that with high probability the puzzle can be constructed into a complete picture from the collection of vertices and their coloured half-edges. Some work concerning this problem can be found in \textcite{Balister2019,Nenadov2017,Martinsson2016,Martinsson2019}.

The topic of this paper, reconstructing random matrices, has been studied before from another point of view. In \textcite{Narayanan2023}, the complete multiset of all $(n-k)^2$ $k \times k$ sub-matrices is given as the information to reconstruct with.

There is no lack of motivation from other sciences for studying reconstruction problems, such as the problem of \textsc{dna} shotgun sequencing. In shotgun assembly, the long \textsc{dna} strands are ``shotgunned'' into smaller pieces that are sequenced. From here, a reconstruction algorithm is used to infer what the original long strand was. For a probabilistic analysis of the unique reconstructibility of \textsc{dna} sequences from shotgunnned strands see \textcite{Motahari2013,Dyer1994,Arratia1996}. Note that the models here are what one of the shotgun assembly problems from \textcite{Mossel2015} is based on, with the special case of the path on $n$ vertices being studied. Shotgun assembly has also begun to appear in neural network theory. \textcite{Soudry2015} consider the problem of reconstructing large neural networks from smaller sub-networks.

\section{The Reconstruction Algorithm}

For a vector $x= (x_1, x_2, \dots, x_n) \in \{0,1\}^n$, we call $|x| = \sum_{i=1}^n x_i$ the weight or Hamming weight of $x$. If $S\subset [n]$ is a set of indices, then $\sum_{i\in S} x_i$ is the sub-weight of $x$ on $S$. Alternatively, the weight of $x$ can be seen as the number of 1's which appear in the entire vector, and the sub-weight in $S$ is the number of 1's in the vector $x$ restricted to the positions indicated by $S$. We have two algorithmic problems to solve:

\begin{enumerate}
    \item Find any permutation pair $(\sigma,\tau)$ that satisfies (\ref{eq:recon}).
    \item Find all permutation pairs $(\sigma,\tau)$ that satisfy (\ref{eq:recon}).
\end{enumerate}

Our algorithm solves (ii) and hence also (i). It can be broken down into two main parts: First we partition each row $\rho_i$ into sub-strings and compute the vector of the associated sub-weights for all $i \in [n]$. Then, using a trie, we can efficiently identify each $\rho_i$ with a position by matching these sub-weight vectors. If we are able to identify each $\rho_i$ with a unique position, then the algorithm is complete. We show this happens with high probability. 

In the case where this does not occur, we move on to part two of the algorithm, where we iterate through all possible permutations of the rows and check if the matrix is correct by checking if it contains all of the columns in $\mathcal{C}(M)$ with the correct multiplicities. Using the information gained from part one, we are able to reduce our search space from all $n!$ permutations of the rows, to a collection that has expected size $O(1)$.

\subsection{Part One}

Given the collection of unordered columns $\gamma_1,...,\gamma_n$, we create a Hamming weight partition of the columns $\mathcal{P} = (\mathcal{P}_0, \dots, \mathcal{P}_n)$, where $\mathcal{P}_i = \{1 \leq j \leq n : |\gamma_j| = i\}$. Now for each $j\in [n]$, and for each integer $k \in [\lfloor np \rfloor, \lfloor np\rfloor + \lfloor\sqrt{np}\rfloor]$, we compute 

$$s_{j,k} = \sum_{i \in \mathcal{P}_k} \gamma_{ij}, \quad \text{where} \quad \gamma_i = 
\begin{bmatrix}
\gamma_{i,1} \\ \vdots \\ \gamma_{i,n}    
\end{bmatrix}
.
$$

For a row to be able to be put in position $j$, its sub-weight on $\mathcal{P}_k$ must be equal to $s_{j,k}$ for all $k \in [\lfloor np \rfloor, \lfloor np\rfloor + \lfloor\sqrt{np}\rfloor]$. Using the values $s_{j,k}$ we store every potential position $j \in [n]$ in the leaves of a trie using the vectors $S_j = (s_{j,\lfloor np \rfloor} \dots, s_{j, \lfloor np \rfloor + \lfloor \sqrt{np}\rfloor})$ as input, which we call the sub-weight vectors associated with position $j$. See \textcite{Knuth1998} for more information on tries and their uses. In our trie, we associate each input with a path. Therefore, it is possible that several paths coincide and that $S_j$ is not unique, i.e.,\ $|\{S_j : 1 \leq j \leq n\}| < n$.

From the collection of rows $\rho_1,...,\rho_n$, we can compute the weight of each column in the original matrix $M$ even without knowing the order, since the weight of a column is invariant under permutation of the rows. This allows us to determine which column positions have which weights. Let $\mathcal{I} = (I_0,I_1,\dots,I_n)$, where 
$$I_j = \{i\in[n] : \text{The column in position $i$ has weight $j$} \}.$$ Now, for all $j \in [n]$, and for each integer $k \in [\lfloor np \rfloor, \lfloor np\rfloor +\lfloor \sqrt{np}\rfloor]$, we compute $t_{j,k}$, which is the sub-weight of the row $\rho_j$ on the indices $I_k$. We collect all of them into a vector 
$$T_{j} = \big(t_{j,\lfloor np \rfloor} \dots, t_{j, \lfloor np \rfloor + \lfloor\sqrt{np}\rfloor}\big) , $$
which we call the signature of $\rho_j$. Since entries $S_j$ (the sub-weight vector of position $j$) and $T_j$ (the signature of $\rho_j$) are generated from the same information with only potentially incorrect labels on $T_j$, we know that
$$\{S_j : 1 \leq j \leq n\} = \{T_j : 1 \leq j \leq n\} . $$
It follows that if $|\{S_j : 1 \leq j \leq n\}| = n$, then are able to identify a unique permutation for each row: For each $j \in [n]$, we define $\sigma_j$ to be the unique $\ell \in [n]$ such that $S_j = T_\ell$. Once the rows have been placed we have reconstructed the matrix and the permutation $\tau$ on the unordered columns can be determined. We do this by first constructing a trie based on all of the columns $C_1,...,C_n$ in the reconstructed matrix $M$ (these are the columns in their original, pre-shredded positions). If the trie has $n$ distinct leaves, then we can define a permutation $\tau$ for $\gamma_1,...,\gamma_n$ in the following way: For each $j \in [n]$, define $\tau_j$ to be the unique $\ell \in [n]$ such that $\gamma_{j} = C_\ell$. If either of the two tries do not have distinct leaves we move on to part 2. 

\subsection{Part Two} 

There are two possible cases where we end up requiring part two to complete the algorithm. First, we require part two when there is at least one leaf in the trie containing row sub-weight vectors which coincides with multiple rows, i.e. $|\{S_j : 1 \leq j \leq n\}| = L < n$. The second case where we require part two is when there are at least two columns coinciding with a single leaf in the trie containing the column vectors, i.e. $C_j=C_k$ for $j\neq k$.

For each vector $S_i \in \{S_j:1\leq j\leq n\}$, let $x_i$ be the multiplicity of that vector,  i.e. the number of rows $\rho_j$ where $S_j = S_i$. Then, since $\rho_j$ can only be assigned to a position $k$ such that $S_j = T_k$, there are $x_1!x_2!\dots x_L!$ possible permutations of $\rho_1,...,\rho_n$ that must be checked. For each possible permutation $\sigma$, we construct a matrix,

\begin{equation*}
    M' = 
    \begin{bmatrix}
    \rho_{\sigma_1} \\ \vdots \\ \rho_{\sigma_n}
    \end{bmatrix}
    .
\end{equation*}

Using the column trie, we determine if $\mathcal{C}(M) =\mathcal{C}(M')$. That is, we determine if both matrices contain the same set of columns with the same multiplicities. If this is the case, then $M'$ is a valid reconstruction. Let $\tau_j$ be an $\ell\in [n]$ such that the column in position $j$ in $M'$ is equal to $\gamma_\ell$ for all $j \in [n]$ (in particular choose the $\tau_j$ such that $\tau = (\tau_1,...,\tau_n)$ is a permutation). Note that at this point, $\ell$ need not be unique and so this could yield many valid matrices. The pair $(\sigma,\tau)$ permutes the rows and columns to create a valid reconstruction $M'$. Let $I_1, \dots, I_m$ be the sets of column indices ($|I_k|>1$) such that for every two indices $i,j \in I_k$, the columns $C_i, C_j$ in $M'$ are equal. Clearly the columns within each $I_{k}$ can be permuted and still give a valid $\tau$ for reconstructing. 

Therefore, for every valid $\sigma$ we compute one of the corresponding column permutations $\tau$, and the sets of indices $I_1, \dots, I_m$, and then output 

$$ (\sigma, \tau, S_{I_1} \times \dots \times S_{I_m}) . $$

Where $S_{I_k}$ is the group of permutations on the elements in the set $I_k$. The set of these triples can generate all of the pairs $(\sigma,\tau)$ which create a valid reconstruction. If we wish to retrieve every pair from the triple, we need only iterate over $\pi \in S_{I_1} \times \dots \times S_{I_k}$ and compute $(\sigma,\pi\tau)$.

\subsection{An Example}

Consider the following collection of its rows and columns (assume that $\mathcal{C}(M) = \{\gamma_1,\gamma_2,\gamma_3,\gamma_4\}$ and $\mathcal{R}(M) = \{\rho_1,\rho_2,\rho_3,\rho_4\}$ are ordered left to right and top to bottom respectively):
$$
\mathcal{C}(M) =
        \left(\!
        \begin{bmatrix}
            1\\ 0 \\ 1 \\ 1
        \end{bmatrix}\!\!,\!\!
        \begin{bmatrix}
            0 \\ 1 \\ 1 \\ 0
        \end{bmatrix}\!\!,\!\!
        \begin{bmatrix}
            0 \\ 1 \\ 0 \\ 1
        \end{bmatrix}\!\!,\!\!
        \begin{bmatrix}
            1 \\ 1 \\ 1 \\ 0
        \end{bmatrix}
        \!\right)
, \
\mathcal{R}(M) = 
        \left(
        \substack{
        \begin{bmatrix}
            1 & 0 & 1 & 0
        \end{bmatrix}, \\
        \begin{bmatrix}
            0 & 1 & 1 & 0
        \end{bmatrix}, \\
        \begin{bmatrix}
            0 & 1 & 1 & 1
        \end{bmatrix}, \\
        \begin{bmatrix}
            1 & 1 & 0 & 1
        \end{bmatrix}
        }
        \right) .
$$
We first construct the partition $\mathcal{P}$ from the column collection $\mathcal{C}(M)$. From this, for each position $j$, we compute the sub-weight vectors $S_j = (s_{j,2}, s_{j,3})$,
\begin{equation*}
    \mathcal{C}(M) =
        \left\{\!
        \begin{bmatrix}
            1 \\ 0 \\ 1 \\ 1
        \end{bmatrix}\!\!,\!\!
        \begin{bmatrix}
            0 \\ 1 \\ 1 \\ 0
        \end{bmatrix}\!\!,\!\!
        \begin{bmatrix}
            0 \\ 1 \\ 0 \\ 1
        \end{bmatrix}\!\!,\!\!
        \begin{bmatrix}
            1 \\ 1 \\ 1 \\ 0
        \end{bmatrix}
        \!\right\}
        \quad 
        \mathcal{P} = \!\left(
        \!\emptyset, 
        \emptyset, 
        \!\left\{ \begin{bmatrix}
            0 \\ 1 \\ 0 \\ 1
        \end{bmatrix}\!,\!
        \begin{bmatrix}
            0 \\ 1 \\ 1 \\ 0
        \end{bmatrix}
        \right\}\!,\!
        \left\{ \begin{bmatrix}
            1 \\ 1 \\ 1 \\ 0
        \end{bmatrix}\!,\!
        \begin{bmatrix}
            1 \\ 0 \\ 1 \\ 1
        \end{bmatrix}
        \right\}\!,
        \emptyset\!
        \right)
        \to 
        \begin{Bmatrix}
        (0,2), \\ (2,1), \\ (1,2), \\ (1,1)
        \end{Bmatrix} .
\end{equation*}
In this case, each of the $S_j$ are distinct, and so the trie we construct with them has exactly $n$ leaves. Each of the leaves contains the indices of the positions with sub-weight vectors which take them to said leaf (in bold):
$$
    \centering
    \begin{tikzpicture} 
        \tikzstyle{n}=[fill=black, circle, inner sep=0.6mm]
        \node[n] {}
            child {
                node[n] {}
                child {
                    node[n,label=below:\textbf{1}] {}
                    edge from parent
                    node[left] {2}
                }    
                edge from parent
                node[above left] {0}}
            child {
                node[n] {}
                child {
                    node[n,label=below:\textbf{4}] {}
                    edge from parent
                    node[left] {1}
                    }
                child {
                    node[n,label=below:\textbf{3}] {}
                    edge from parent
                    node[right] {2}
                    }
                edge from parent
                node[left] {1}
            }
            child {
                node[n] {}
                child {
                    node[n,label=below: \textbf{2}] {}
                    edge from parent
                    node[right] {1}
                }
                edge from parent
                node[above right] {2}
                };
    \end{tikzpicture}
$$
Next we compute the signatures for each of the row vectors. We do this by first computing $\mathcal{I}$, and using the indices to determine the values of each entry,
\begin{equation*}
    \mathcal{R}(M) = 
        \left\{
        \substack{
        \begin{bmatrix}
            1 & 0 & 1 & 0
        \end{bmatrix}, \\
        \begin{bmatrix}
            0 & 1 & 1 & 0
        \end{bmatrix}, \\
        \begin{bmatrix}
            0 & 1 & 1 & 1
        \end{bmatrix}, \\
        \begin{bmatrix}
            1 & 1 & 0 & 1
        \end{bmatrix}
        }
        \right\} 
        \quad 
        \mathcal{I} = \!\left(
        \emptyset, \emptyset, 
        \left\{ 1,4 \right\},
        \left\{ 2,3 \right\},
        \emptyset
        \right)\to 
        \begin{Bmatrix}
        (1,1), \\ (0,2), \\ (1,2), \\ (2,1)
        \end{Bmatrix}.
\end{equation*}
Now we use the set of signatures and search through the trie generated by the sub-weight vectors. Each signature reaches a leaf, which then tells us which positions the that row are allowed to be in. In this example, they are each mapped to a unique position, telling us that $\sigma = (142)(3)$ is the permutation to apply on $\mathcal{R}(M)$ in order to obtain $M$. Doing so gives us our unique matrix
\begin{equation*}
    M = \begin{bmatrix}
        0 & 1 & 1 & 0 \\
        1 & 1 & 0 & 1 \\
        0 & 1 & 1 & 1 \\
        1 & 0 & 1 & 0
    \end{bmatrix} .
\end{equation*}
Since we have no duplicate columns, there is also a unique $\tau = (13)(24)$. The final output would be $((142)(3), (13)(24), \{\id\})$ as there is only one way to permute the columns to reconstruct the matrix.

For a second example, let us consider a case where we have duplicate sub-weight vectors and duplicate columns. Below is the result of doing part one to some matrix $M$, we can see that the first row in $\mathcal{R}(M)$ belongs in the second position, but the remaining rows' positions are unknown,

\begin{equation*}
    \mathcal{R}(M) = 
        \left\{
        \substack{
        \begin{bmatrix}
            1 & 0 & 0 & 0
        \end{bmatrix}, \\
        \begin{bmatrix}
            1 & 1 & 1 & 0
        \end{bmatrix}, \\
        \begin{bmatrix}
            0 & 1 & 1 & 1
        \end{bmatrix}, \\
        \begin{bmatrix}
            0 & 1 & 1 & 1
        \end{bmatrix}
        }
        \right\} \to 
        \begin{Bmatrix}
        (1,0), \\ (1,2), \\ (1,2), \\ (1,2)
        \end{Bmatrix}
        \quad
        \mathcal{C}(M) =
        \left\{\!
        \begin{bmatrix}
            1 \\ 0 \\ 0 \\ 1
        \end{bmatrix}\!\!,\!\!
        \begin{bmatrix}
            0 \\ 1 \\ 1 \\ 0
        \end{bmatrix}\!\!,\!\!
        \begin{bmatrix}
            1 \\ 0 \\ 1 \\ 1
        \end{bmatrix}\!\!,\!\!
        \begin{bmatrix}
            1 \\ 0 \\ 1 \\ 1
        \end{bmatrix}
        \!\right\}\to 
        \begin{Bmatrix}
        (1,2), \\ (1,0), \\ (1,2), \\ (1,2)
        \end{Bmatrix} .
\end{equation*}
As three rows have the same signature, we have $6$ permutations of the rows to check, 
$$\{(12), (12)(34), (123), (124), (1234), (1243)\},$$
which results in matrices,
\begin{align*}
    M_{(12)} = \begin{bmatrix}
        1 & 1 & 1 & 0 \\
        1 & 0 & 0 & 0 \\
        0 & 1 & 1 & 1 \\
        0 & 1 & 1 & 1
    \end{bmatrix} &\quad
    M_{(12)(34)} = \begin{bmatrix}
        1 & 1 & 1 & 0 \\
        1 & 0 & 0 & 0 \\
        0 & 1 & 1 & 1 \\
        0 & 1 & 1 & 1
    \end{bmatrix}  \\
    M_{(123)} = \begin{bmatrix}
        0 & 1 & 1 & 1 \\
        1 & 0 & 0 & 0 \\
        1 & 1 & 1 & 0 \\
        0 & 1 & 1 & 1
    \end{bmatrix} &\qquad
    M_{(124)} = \begin{bmatrix}
        0 & 1 & 1 & 1 \\
        1 & 0 & 0 & 0 \\
        0 & 1 & 1 & 1 \\
        1 & 1 & 1 & 0
    \end{bmatrix} \\
    M_{(1234)} = \begin{bmatrix}
        0 & 1 & 1 & 1 \\
        1 & 0 & 0 & 0 \\
        1 & 1 & 1 & 0 \\
        0 & 1 & 1 & 1
    \end{bmatrix} &\qquad \!\!
    M_{(1243)} = \begin{bmatrix}
        0 & 1 & 1 & 1 \\
        1 & 0 & 0 & 0 \\
        0 & 1 & 1 & 1 \\
        1 & 1 & 1 & 0
    \end{bmatrix} .
\end{align*}
Since there are duplicate rows, some of these permutations result in the same matrix. Regardless, using the column trie below, we can iterate through each $M_\sigma$ and see if $\mathcal{C}(M) = \mathcal{C}(M_\sigma)$:
$$
    \centering
    \begin{tikzpicture}[level distance = 10mm]
    \tikzstyle{n}=[fill=black, circle, inner sep=0.6mm]
        \node[n] {}
            child {
                node[n] {}
                child {
                    node[n] {}
                    child {
                        node[n] {}
                        child {
                            node[n,label=below:\textbf{1}] {}
                            edge from parent
                            node[left] {0}
                        }
                        edge from parent
                        node[left] {1}  
                    }
                    edge from parent
                    node[left] {1}
                }
                edge from parent
                node[left] {0}  
            }
            child {
                node[n] {}
                child{
                        node[n] {}
                        child{
                            node[n] {}
                            child{
                                node[n,label=below:\textbf{1}] {}
                                edge from parent
                                node[left] {1}
                            }
                            edge from parent
                            node[left] {0}
                        }
                        child{
                            node[n] {}
                            child{
                                node[n,label=below:\textbf{2}] {}
                                edge from parent
                                node[right] {1}
                            }
                            edge from parent
                            node[right] {1}
                        }
                        edge from parent
                        node[right] {0}
                    }
                edge from parent
                node[right] {1}
            };
    \end{tikzpicture}
$$
From this we can see that the only $\sigma$ that give us valid matrices are from $(123)$ and $(1234)$, and since there are two identical columns in positions $2$ and $3$, the corresponding permutation groups are both $S_{\{2,3\}}$. The solution set for this example is
\begin{equation*}
    \{((123), (142), S_{\{2,3\}}),  ((1234),(142), S_{\{2,3\}})\}.
\end{equation*}

\subsection{Time Complexity} 

The time complexity achieved by our algorithm assumes the RAM model of computation. Computing the weights of the vectors and computing all the sub-weights takes time $O(n^2)$, since we can upper bound both of these by computing the sum of all entries in the matrix. Creating the trie with the sub-weight vectors would take time $O(n^{3/2})$ since the size of the strings used in the trie are bounded above by $\sqrt{np}\leq \sqrt{n}$. Since the height of the trie is $O(\sqrt{n})$, matching each $R_i$ to a set of positions at a leaf, takes total time $O(n^{3/2})$. Next we create the column trie, which takes $O(n^2)$ as we have $n$ length $n$ vectors to insert. It is interesting to note, the process of determining which rows belong in which positions is not the most time intensive step, in fact, simply determining the weights of the vectors is what gives us our time complexity.

In part two, for each valid permutation, we first check that $\mathcal{C}(M)$=$\mathcal{C}(M')$ by searching for each column in $M'$ in the column trie, keeping track of multiplicities. This takes time $O(n^2)$. Once a valid $\sigma$ is found, we must compute a single $\tau$, which we can get from reading the columns of $M'$ generated by $\sigma$ applied on the rows, in $O(n)$ time. Using the column trie, we can create the sets $I_1,\dots, I_m$ in $O(n^2)$ time. 

Let $P= x_1!x_2!\dots x_L!$ be the number of permutations $\sigma$ that we have to check. Then the entirety of part two takes expected time $O(n^2\E[P])$. In section 4, we show that $\ex[P] \to 1$ as $n \to \infty$ for $p$ in some range, implying that the expected time for the algorithm is $O(n^2)$. We also show that the probability we require step two to complete the algorithm tends to 0 as $n \to \infty$ for $p$ in another range, implying that the completion time is also $O(n^2)$ with high probability.

\section{Main Result}

The time complexity discussion from the previous section culminates in our main result. 

\begin{theorem}\label{alg}
If $p\geq \frac{16(1+\epsilon)\log^2(n)}{n (\log\log(n))^2}$ for $\epsilon > 0$, then, 
$$\Prob(\text{Algorithm terminates at first step}) \to 1 \text{ as } n \to \infty.$$
Hence, the algorithm succeeds in producing a unique reconstruction in $O(n^2)$ time with high probability. Furthermore, if $p\geq \frac{36(1+\epsilon)\log^2(n)}{n (\log\log(n))^2}$ for $\epsilon > 0$, the expected running time of the algorithm is also $O(n^2)$, with the expected number of permutations that require checking in step two converging to 1 as $n \to \infty$
\end{theorem}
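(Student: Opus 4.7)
The plan is to derive both statements of the theorem from one sharp estimate on the pairwise signature-match probability $\Prob(T_{j_1}=T_{j_2})$. Part one of the algorithm succeeds exactly when the $n$ signatures $\{T_j\}$ and the $n$ columns $\{\gamma_i\}$ are each pairwise distinct. Two i.i.d.\ columns coincide with probability at most $(p^2+(1-p)^2)^n \le e^{-np(1-p)}$, so a union bound over $\binom{n}{2}$ column pairs is $o(1)$ whenever $np\gg\log n$; this is the content of Lemma~\ref{duplicate} and is subsumed by our hypothesis. The real work is the pairwise signature bound.

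To estimate $\Prob(T_{j_1}=T_{j_2})$, I would condition on all entries of $M$ outside rows $j_1$ and $j_2$. Let $W_i:=\sum_{j\neq j_1,j_2} m_{j,i}$ and $N_\ell:=|\{i:W_i=\ell\}|$. Columns with $m_{j_1,i}=m_{j_2,i}$ contribute identically to both rows' sub-weights, so only the \emph{distinguishing} columns (types $A=(1,0)$ and $B=(0,1)$) matter. A distinguishing column lies in $I_k$ precisely when $W_i=k-1$, so the event $T_{j_1}=T_{j_2}$ factors into the product over $k$ of the events $|A\cap I_k|=|B\cap I_k|$. Conditionally on the set of $m_{k-1}$ distinguishing columns in bucket $k-1$, each is in $A$ or $B$ independently with probability $1/2$, so the per-bucket event is a balanced binomial with probability $\binom{m_{k-1}}{m_{k-1}/2}/2^{m_{k-1}}=O(1/\sqrt{m_{k-1}})$ (and $0$ when $m_{k-1}$ is odd). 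A local limit theorem for $\mathrm{Binomial}(n-2,p)$ gives $N_\ell\asymp n/\sqrt{np(1-p)}$ uniformly over the $\Theta(\sqrt{np})$ values of $\ell$ in the range, and a Chernoff-type concentration bound (presumably in Section~6) yields $m_\ell\gtrsim p(1-p)N_\ell\asymp\sqrt{np}$ uniformly with probability $1-n^{-\omega(1)}$. Multiplying the per-bucket factors,
$$\Prob(T_{j_1}=T_{j_2})\le (np)^{-\sqrt{np}/4\,+\,o(\sqrt{np})}.$$
For $np\ge 16(1+\epsilon)\log^2 n/(\log\log n)^2$, this gives $(\sqrt{np}/4)\log(np)\ge 2\sqrt{1+\epsilon}\log n$, so the pairwise probability is $o(n^{-2})$, and a union bound over $\binom{n}{2}$ row pairs establishes $\Prob(\text{part one fails})\to 0$.

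For the expected-time assertion, write $P=|\{\sigma\in S_n:T_{\sigma(j)}=T_j\;\forall j\}|=\sum_\sigma\prod_j\mathbf{1}\{T_{\sigma(j)}=T_j\}$ and take expectation: $\ex[P]=1+\sum_{\sigma\neq\id}\Prob(T\circ\sigma=T)$. A $\sigma$ whose non-fixed points form cycles of lengths $c_1,\ldots,c_r\ge 2$ forces the signatures within each cycle to agree; choosing $\lfloor c_i/2\rfloor$ disjoint row pairs inside each cycle and repeating the conditioning argument above (with the pair-events controlled by disjoint sets of distinguishing columns) gives
$$\Prob(T\circ\sigma=T)\le \bigl(1+o(1)\bigr)\,\Prob(T_{j_1}=T_{j_2})^{\sum_i\lfloor c_i/2\rfloor}.$$
Since $\lfloor c/2\rfloor\ge c/3$ for every $c\ge 2$, with equality only at $c=3$, a $\sigma$ with $k=\sum_i c_i$ non-fixed points admits at least $\lceil k/3\rceil$ disjoint pair matches, and the number of such $\sigma$ is bounded by $n^k$. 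The tail sum is therefore at most
$$\sum_{k\ge 2}n^k\,\Prob(T_{j_1}=T_{j_2})^{\lceil k/3\rceil}\;\lesssim\;\sum_{k\ge 2}\bigl(n^3\Prob(T_{j_1}=T_{j_2})\bigr)^{k/3},$$
and requiring the base $n^3\Prob(T_{j_1}=T_{j_2})$ to be $o(1)$ is equivalent to $(\sqrt{np}/4)\log(np)>3\log n$. At $np=36(1+\epsilon)\log^2 n/(\log\log n)^2$ this evaluates to $3\sqrt{1+\epsilon}\log n>3\log n$, precisely absorbing the worst-case single-$3$-cycle contribution ($k=3$); the geometric tail then gives $\ex[P]\to 1$, and multiplying by the $O(n^2)$ cost per permutation check yields expected running time $O(n^2)$. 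The main technical obstacle throughout is establishing the sharp constant $1/4$ in the Stirling exponent — both the uniform lower bound on $m_\ell$ across the $\Theta(\sqrt{np})$ buckets and the approximate-independence claim that lets disjoint row pairs multiply — since any loss here translates directly into a larger constant than $16$ or $36$ in the threshold.
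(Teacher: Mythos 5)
Your proposal is correct and follows essentially the same strategy as the paper. The core estimate is identical: per weight-bucket, the event $t_{j_1,k}=t_{j_2,k}$ reduces to a balanced count of $(1,0)$ versus $(0,1)$ columns among the distinguishing positions, worth $O((np)^{-1/4})$ by Stirling, and multiplying over the $\Theta(\sqrt{np})$ buckets --- after a Chernoff bound guaranteeing each bucket is large enough, which is the paper's Lemma \ref{chernoff} --- gives $\Prob(T_{j_1}=T_{j_2}) \le (np)^{-(1+o(1))\sqrt{np}/4}$ and hence the threshold $16$ via the union bound over $n^2$ pairs. Your conditioning on the column weights outside rows $j_1,j_2$ (so that each distinguishing column is an independent fair coin) is a slightly cleaner route to the per-bucket factorization than the paper's, which conditions on the full partition sizes $|\mathcal{P}_k|$ and computes $\Prob((X_i,Y_i)=(0,1))=k(n-k)/(n(n-1))$. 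For $\E[P]$ you organize the first-moment sum over permutations by cycle type and use $\sum_i\lfloor c_i/2\rfloor\ge k/3$, whereas the paper sums over the multiplicities of repeated signature classes and bounds $\E[P]\le\bigl(1+\sum_{k\ge 2}k!\binom{n}{k}\Prob(T_1=T_2)^{k-1}\bigr)^n$; both accountings reduce to the same requirement $n^3\,\Prob(T_1=T_2)\to 0$, which is exactly where the constant $36$ arises, and both leave the approximate-independence step (multiplying pairwise collision probabilities across several rows, where bucket assignments couple the pairs) at a comparable level of detail.
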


To complete the proof of Theorem \ref{alg}, we need to bound the probability that two rows $\rho_i$ and $\rho_j$ share the same signature vectors $T_i$ and $T_j$. In order to analyze this we need to obtain some bounds on the size of each group in the partition $|\mathcal{P}| = (|\mathcal{P}_1|,...,|\mathcal{P}_n|)$. In particular, we want the groups near the average $np$ to be sufficiently large as these columns are the ones that the algorithm uses to generate sub-weight vectors and larger sub-strings produce sub-weights with larger variance. Since each column sum is a $\bin(n,p)$ random variable, and we have $n$ distinct columns, $|\mathcal{P}|$ has a multinomial distribution with parameters $n$ and $b = (b_{n,p,1},...,b_{n,p,n})$, where
$$b_{n,p,k} = \Prob(\bin(n,p) = k) = \binom{n}{k}p^k(1-p)^{n-k}.$$
The bounds we desire for $|\mathcal{P}|$ are given by the following lemma.

\begin{lemma}\label{chernoff}
Suppose that $p=p(n)$ is some sequence such that $np \geq 16$. There exists a positive constant $\gamma > 0$ such that $b_{n,p,\lfloor np \rfloor +i} \geq 2\gamma\frac{1}{\sqrt{np}}$ for all $i \in [0,\lfloor\sqrt{np}\rfloor]$. Furthermore,
$$\Prob\left(|\mathcal{P}_{\lfloor np \rfloor+i}| \leq \gamma\sqrt{\frac{n}{p}}\right) \leq e^{-\frac{1}{6}\gamma \sqrt{\frac{n}{p}}} . $$
\end{lemma}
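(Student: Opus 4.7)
The plan is to prove the two parts separately, establishing the pointwise lower bound on the binomial pmf first and then deducing the concentration bound via a standard Chernoff estimate.

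For part 1, I would start by estimating the modal value $b_{n,p,\lfloor np\rfloor}$ using Stirling's formula in the form
$$b_{n,p,k} = \frac{1}{\sqrt{2\pi}}\sqrt{\frac{n}{k(n-k)}}\left(\frac{np}{k}\right)^{k}\left(\frac{n(1-p)}{n-k}\right)^{n-k}e^{\theta_n},$$
with $|\theta_n| \leq 1/(12\min(k,n-k))$. For $k=\lfloor np\rfloor$, the product of the two exponentials is bounded below by a positive constant (since $|k/n-p| \leq 1/n$ keeps the KL-divergence term $-nD(k/n\,\|\,p)$ uniformly bounded), and the prefactor is $\Theta(1/\sqrt{np(1-p)}) \geq \Theta(1/\sqrt{np})$ using $p \leq 1/2$. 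This gives $b_{n,p,\lfloor np\rfloor} \geq c_0/\sqrt{np}$. To extend the bound to all $k=\lfloor np\rfloor+i$ with $i\in[0,\lfloor\sqrt{np}\rfloor]$, I would iterate the identity
$$\frac{b_{n,p,k+1}}{b_{n,p,k}} = \frac{(n-k)p}{(k+1)(1-p)},$$
which, for $k$ in the stated range and $np\geq 16$, differs from $1$ by at most $C/\sqrt{np}$ for a universal $C$. Multiplying at most $\sqrt{np}$ such factors yields $b_{n,p,\lfloor np\rfloor+i} \geq c_1\cdot b_{n,p,\lfloor np\rfloor}$ for some universal $c_1>0$, and hence $b_{n,p,\lfloor np\rfloor+i} \geq 2\gamma/\sqrt{np}$ for $\gamma := c_0c_1/2$.

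For part 2, I would observe that since the columns of $M$ are i.i.d.\ Bernoulli$(p)$ vectors, the column weights $|\gamma_1|,\dots,|\gamma_n|$ are i.i.d.\ Binomial$(n,p)$. Therefore $|\mathcal{P}_{\lfloor np\rfloor+i}|$ is itself a Binomial$(n, b_{n,p,\lfloor np\rfloor+i})$ random variable, with mean
$$\mu = n\cdot b_{n,p,\lfloor np\rfloor+i} \geq 2\gamma\sqrt{\tfrac{n}{p}}.$$
I would then apply the standard multiplicative Chernoff lower-tail inequality $\Prob(X\leq(1-\delta)\mu) \leq e^{-\delta^2\mu/2}$ with $\delta=1/2$, giving $\Prob(|\mathcal{P}_{\lfloor np\rfloor+i}| \leq \mu/2) \leq e^{-\mu/8}$. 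Since $\mu/2 \geq \gamma\sqrt{n/p}$ and $\mu/8 \geq \gamma\sqrt{n/p}/4 \geq \gamma\sqrt{n/p}/6$, the stated bound follows.

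The main obstacle is the bookkeeping in part 1: one must verify that both the Stirling remainder and the cumulative perturbation from the ratio product remain uniformly bounded over the whole range $i\in[0,\sqrt{np}]$. The condition $np \geq 16$ is what buys this — each of the $\sqrt{np}$ multiplicative factors deviates from $1$ by $O(1/\sqrt{np})$, so the product is $e^{O(1)}$, and the constant $\gamma$ can then be chosen concretely. Once $\gamma$ is fixed, the Chernoff step is essentially one line.
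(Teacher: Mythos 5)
Your proposal is correct, and the second half (identifying $|\mathcal{P}_{\lfloor np\rfloor+i}|$ as a $\bin(n,b_{n,p,\lfloor np\rfloor+i})$ variable and applying a multiplicative Chernoff lower-tail bound with $\delta=\tfrac12$) is exactly what the paper does; your exponent $\mu/8$ is even slightly stronger than the paper's $\mu/12$, so the stated $e^{-\gamma\sqrt{n/p}/6}$ follows either way. The only real difference is in how you obtain the pointwise bound $b_{n,p,\lfloor np\rfloor+i}\geq 2\gamma/\sqrt{np}$: the paper applies Stirling directly at every $k=np+x$ in the window, factoring the result as $A_1A_2A_3$ and bounding each factor uniformly in $x\in[0,\sqrt{np}]$ (obtaining the explicit constant $\gamma=\tfrac12\cdot 16^{-1}e^{-2}(3\pi)^{-1/2}$), whereas you apply Stirling only at the mode and then propagate along the window via the consecutive-ratio identity $b_{n,p,k+1}/b_{n,p,k}=\frac{(n-k)p}{(k+1)(1-p)}$. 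Both routes are sound; your telescoping argument needs the slightly more careful observation that the per-step deviation from $1$ grows linearly in $i$ (it is of order $(i+1)/(np(1-p))$, which is only $O(1/\sqrt{np})$ at the far end of the window), but the cumulative log-deviation $\sum_{i\leq\sqrt{np}}(i+1)/(np(1-p))$ is still $O(1)$, so the product is bounded below by a universal positive constant as you claim. The paper's direct computation has the advantage of producing a concrete numerical $\gamma$ with no iteration, while yours isolates the standard local-limit fact at the mode and treats the window as a perturbation.
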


Since the algorithm also requires passing to part two when two columns are equal, we need the next lemma as well.

\begin{lemma}\label{duplicate}
Let $M$ be an $n \times n$ random binary matrix with i.i.d.\ entries $m_{ij}$ such that $\prob(m_{ij} = 1) = p$ and $\prob(m_{ij} = 0) = 1-p$. Then, for any $\epsilon > 0$, $\prob(M \text{ has at least two equal rows or columns}) \to 0$ as $n \to \infty$ if $p \geq \frac{(1+\epsilon)\log(n)}{n}$.
\end{lemma}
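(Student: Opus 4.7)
The plan is a standard first-moment union bound. First I would fix two distinct row indices $i \neq j$ and note that, since the entries of $M$ are i.i.d., the events $\{m_{ik}=m_{jk}\}$ are mutually independent over $k \in [n]$, each occurring with probability $p^2+(1-p)^2$; hence
$$\Prob(\rho_i=\rho_j) = \bigl(p^2+(1-p)^2\bigr)^n,$$
and by symmetry the same expression bounds the probability that any two fixed columns coincide. Union-bounding over the $\binom{n}{2}$ row pairs and $\binom{n}{2}$ column pairs yields
$$\Prob(M \text{ has two equal rows or columns}) \leq n^2 \bigl(p^2+(1-p)^2\bigr)^n.$$

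Next I would show that the right-hand side tends to $0$ uniformly for $p$ in the stated range. Writing $q(p) := p^2+(1-p)^2 = 1-2p(1-p)$, its derivative $4p-2$ is negative on $[0,1/2)$, so $q$ is strictly decreasing there. It therefore suffices to control $q(p_0)^n$ at the left endpoint $p_0 := (1+\epsilon)\log(n)/n$, since the stated hypothesis places $p \in [p_0, 1/2]$. Using $\log(1-x)\leq -x$, one computes
$$q(p_0)^n \leq \exp\bigl(-2np_0(1-p_0)\bigr) = \exp\!\left(-2(1+\epsilon)\log n + O\!\left(\tfrac{\log^2 n}{n}\right)\right) = O\bigl(n^{-2(1+\epsilon)}\bigr).$$
Multiplying by the $n^2$ from the union bound gives $O(n^{-2\epsilon}) \to 0$, as desired.

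The argument is essentially routine; the only subtlety worth flagging is making the tail bound uniform across the entire interval $p \in [p_0, 1/2]$, which is what the monotonicity of $q$ on $[0,1/2]$ is for --- it reduces the problem to the single worst-case $p = p_0$. No deeper obstacles are anticipated.
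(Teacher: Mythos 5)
Your proposal is correct and follows essentially the same argument as the paper: a first-moment/union bound over the $\binom{n}{2}$ row pairs and $\binom{n}{2}$ column pairs, with $\Prob(\rho_i=\rho_j)=(p^2+(1-p)^2)^n$. The only difference is in how the resulting expression is shown to vanish uniformly over the range of $p$: you exploit the monotonicity of $q(p)=p^2+(1-p)^2$ on $[0,\tfrac12]$ to reduce to the endpoint $p_0$, whereas the paper bounds $(p^2+(1-p)^2)^n \leq e^{-2np+4np^2}$ --- and your version is in fact the more careful one, since the paper's exponent $-2np(1-2p)$ degenerates as $p \to \tfrac12$ and its displayed convergence claim needs exactly the endpoint/monotonicity observation you supply.
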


\begin{proof}[Proof of Theorem \ref{alg}]
There are two cases in which we proceed to the second step of the algorithm: first, when there are at least two identical sub-weight vectors, or second, when at least two columns are identical. The probability of the second criteria is shown by Lemma \ref{duplicate} to converge to 0 as $n \to \infty$ for $p$ of the form described, so it suffices to show that the probability of the first criteria occurring also converges to 0 as $n \to \infty$. We call this event $A(n,p)$. Recall from section 3 that we begin step one of the algorithm by partitioning the columns according to their weight into collections $\mathcal{P} = (\mathcal{P}_1,...,\mathcal{P}_n)$, and that $I_k$ denotes the indices corresponding to columns in $\mathcal{P}_k$. 
    
For a particular $k$, let the sub-strings of $\rho_1$ and $\rho_2$ that only contains entries with indices in $I_k$ be denoted by $X = (X_1,...,X_{|\mathcal{P}_k|})$ and $Y=(Y_1,...,Y_{|\mathcal{P}_k|})$. In order to have $t_{1,k}=t_{2,k}$, we require that $\sum_{i=1}^{|\mathcal{P}_k|}X_i = \sum_{i=1}^{|\mathcal{P}_k|}Y_i$. The sums are equal if and only if
$$|\{ i : 1 \leq i \leq n, \ (X_i,Y_i) = (0,1)\}| = |\{i : 1 \leq i \leq n, \ (X_i,Y_i) = (1,0)\}| , $$
as an outcome of $(0,0)$ or $(1,1)$ does not change the gap between the sum (for shorthand we write $\#(0,1)$ and $\#(1,0)$ to denote the two cardinalities). Since each of the $(X_i,Y_i)$ are pairs of row entries that both lie within columns of weight $k$, and the 1's are equally likely to be anywhere in each of the columns, we can see that for any $i \in 1,...,|\mathcal{P}_k|$,

\begin{align*}
\Prob((X_i,Y_i) = (0,1)) = \Prob((X_i,Y_i) = (1,0)) = \frac{k(n-k)}{n(n-1)} . 
\end{align*}
Since we assume that $k \in [\lfloor np \rfloor, \lfloor np \rfloor + \lfloor \sqrt{np} \rfloor]$ it holds that there is some $\alpha \in (0,1)$ such that

$$\frac{k(n-k)}{n(n-1)} \sim \frac{(np+\alpha\sqrt{np})(n(1-p)-\alpha\sqrt{np})}{n(n-1)} = p\left(\frac{n}{n-1}\right)\left(1 + \frac{\alpha}{\sqrt{np}}\right)\left(1-p-\frac{\alpha p}{\sqrt{np}}\right) , $$
and so $\Prob((X_i,Y_i) = (0,1)) = \Prob((X_i,Y_i) = (1,0)) = \Theta(p)$ (note that $np \to \infty$ by the assumptions on $p$). For each $m \in \{1,...,n\}$, the conditional probability $\Prob(t_{1,k} = t_{2,k} | \{|\mathcal{P}_k| = m\})$ is equal to
\begin{align*}
\sum_{i=0}^{\lfloor m/2 \rfloor} \Prob(\#(0,1) + \#(0,1) = 2i)\Prob\big(\{\#(0,1) = \#(1,0) = i\} \big| \{\#(0,1) + \#(1,0) = 2i\}\big) . 
\end{align*}
Since $(0,1)$ and $(1,0)$ occur with equal probability, when we condition on there being $2i$ of them total, the values $\#(0,1)$ and $\#(0,1)$ follow a $\bin(2i,1/2)$ distribution. Define $\tilde{p} := \frac{2k(n-k)}{n(n-1)} = \Prob((X_i,Y_i) = (0,1)\text{ or } (1,0))$. From here, applying Stirling's approximation we obtain some $\beta > 0$ such that
\begin{align*}
\Prob(t_{1,k} = t_{2,k} | \{|\mathcal{P}_k| = m\}) &= \sum_{i=0}^{\lfloor m/2 \rfloor} \Prob(\bin(m,\tilde{p}) = 2i)\Prob(\bin(2i,1/2) = i)\\
&\leq  \beta\left(\sum_{i=0}^{\lfloor m/2 \rfloor} \frac{1}{\sqrt{2i \vee 1}}\Prob(\bin(m,\tilde{p}) = 2i)\right) \\
&\leq \beta\ex\left[ \frac{1}{\sqrt{\bin(m,\tilde{p}) \vee 1}} \right] \\
&\leq \frac{3\beta}{\sqrt{m\tilde{p}}} .
\end{align*}
See Lemma \ref{binomial} for a proof of the final inequality. Since we care about the case where $m \geq \gamma\sqrt{\frac{n}{p}}$ and take $n \to \infty$ we can safely assume the inequality holds. Let
$$S = \left\{\{(x_1,...,x_n) : x_i \geq \gamma\sqrt{\frac{n}{p}} \ \text{ for all } i \in [\lfloor np \rfloor , \lfloor np \rfloor + \lfloor \sqrt{np} \rfloor]\right\} , $$
where $\gamma > 0$ is the one from Lemma \ref{chernoff}. When we condition on the sizes of $|\mathcal{P}_k|$ for all $k \in \{1,...,n\}$, the events $\{t_{1,k} = t_{2,k}\}$ are all independent of each other and the sizes of all other columns. Hence, since increasing $m$ only decreases the upper bound for $\prob(t_{1,k} = t_{2,k}|\{|\mathcal{P}_k| = m\})$,
\begin{align*}
    &\sum_{(x_1,...,x_n) \in S} \Prob\left(T_1=T_2\Bigg| \bigcap_{k=1}^n \{|\mathcal{P}_k| = x_k\}\right)\Prob\left(\bigcap_{k=1}^n \{|\mathcal{P}_k| = x_k\}\right) \\
    \leq& \sum_{(x_1,...,x_n) \in S} \left(\frac{3\beta}{\sqrt{\gamma \tilde{p}\sqrt{\frac{n}{p}}}}\right)^{\sqrt{np}}\Prob\left(\bigcap_{k=1}^n\{|\mathcal{P}_k| = x_k\}\right) \\
    =& \left(\frac{3\beta}{\sqrt{\gamma \tilde{p}\sqrt{\frac{n}{p}}}}\right)^{\sqrt{np}}\Prob\bigg((|\mathcal{P}_1|,...,|\mathcal{P}_n|) \in S\bigg) \\
    \leq& \left(\frac{3\beta}{\sqrt{\gamma \tilde{p}\sqrt{\frac{n}{p}}}}\right)^{\sqrt{np}} ,
\end{align*}
where $T_i$ is the signature of $\rho_i$ as defined in section 3. On the other hand for $S^c$, we have that
\begin{align*}
\sum_{(x_1,...,x_n) \in S^c} \Prob\left(T_1=T_2\Bigg| \bigcap_{k=1}^n \{|\mathcal{P}_k| = x_k\}\right)\Prob\left(\bigcap_{k=1}^n \{|\mathcal{P}_k| = x_k\}\right) &\leq \Prob\bigg( (|\mathcal{P}_1|,...,|\mathcal{P}_n|) \in S^c \bigg) , 
\end{align*}
which is a good enough bound because Lemma \ref{chernoff} combined with the union bound ensures that the right side of the inequality is upper bounded by $(\sqrt{np})e^{-\frac{1}{6}\gamma\sqrt{\frac{n}{p}}}$. Putting these two pieces together we get that
\begin{equation}\label{anp}
    \Prob(A(n,p)) \leq n^2 \Prob(T_1=T_2)
    \leq n^2\left(\frac{3\beta}{\sqrt{\gamma \tilde{p}\sqrt{\frac{n}{p}}}}\right)^{\sqrt{np}} + n^2(\sqrt{np})e^{-\frac{1}{6}\gamma\sqrt{\frac{n}{p}}} . 
\end{equation}    
The right term clearly tends to 0 as $n \to \infty$. For the left term, we note that since $\tilde{p} = \Theta(p)$, we can group up all the constants into some $C > 0$ such that
\begin{align*}
    n^2\left(\frac{3\beta}{\sqrt{\gamma \tilde{p}\sqrt{\frac{n}{p}}}}\right)^{\sqrt{np}} &\leq n^2\left(\frac{C}{(np)^{1/4}}\right)^{\sqrt{np}} = \exp\left\{2\log(n) + 2\log(C)\sqrt{np} - \frac{1}{4}\sqrt{np}\log(np)\right\} ,
\end{align*}
which tends to 0 as $n \to \infty$ whenever $p \geq \frac{16(1+\epsilon)\log^2(n)}{n(\log\log(n))^2}$ for some $\epsilon > 0$. 

Now we discuss the time complexity of part two. As mentioned in Section 3, the time complexity of part two is $O(n^2P)$, where $P$ is the number of valid permutations to check. Hence, it is sufficient to show that $\E[P] = O(1)$ as part one always takes $O(n^2)$ time. The number of permutations we need to check only depends on the sizes of the sets of rows with the same sub-weight vectors, and not their positions. Thus, we sum over $j$ representing the number of non-unique sub-weight vectors, and then sum over $n_1,n_2,\dots,n_j$ such that $n_1+\dots+,n_j\leq n$, which represent the number of rows that share the same sub-weight vector. We also have the conditions $n_i>1$ as otherwise this would imply that it is a unique sub-weight vector, and $n_i\geq n_{i+1}$ as this avoids double counting. With this we get the following upper bounds for $\ex[P]$:

\begin{align*}
    &\sum_{j=1}^{n}\sum_{\substack{n_1+n_2+\dots+ n_j \leq n \\ 
    \forall i, n_i > 1 \quad n_i\geq n_{i+1}}} (n_1!)(n_2!)\dots(n_j!)\binom{n}{n_1,n_2,\dots, n_j}\prod_{i=1}^j\Prob(n_i \text{ rows have same sub-weight vector}) \\
    \leq& \sum_{j=1}^{n}\sum_{\substack{n_1+n_2+\dots +n_j \leq n \\ 
    \forall i, n_i > 1 \quad n_i\geq n_{i+1}}} (n_1!)(n_2!)\dots(n_j!)\binom{n}{n_1,n_2,\dots, n_j}\prob(T_1= T_2)^{\sum_{i=1}^j n_i-1} \\
    \leq& \left(1+\sum_{k=2}^n k! \binom{n}{k}\Prob(T_1=T_2)^{k-1}\right)^n .
\end{align*}

The last line, after expanding the product, contains terms which upper bound each term in the previous line upon applying the bound $\binom{n}{n_1,n_2,\dots, n_j}\leq \binom{n}{n_1}\dots\binom{n}{n_j}$. Reusing the bound from (\ref{anp}) we get that

\begin{align*}
    \prob(T_1=T_2) &\leq \left(\frac{C}{(np)^{1/4}}\right)^{\sqrt{np}} + (\sqrt{np})e^{-\frac{1}{6}\gamma\sqrt{\frac{n}{p}}} \leq n^{-(1+o(1))3\sqrt{1+\epsilon}},
\end{align*}
when $p\geq \frac{36(1+\epsilon)\log^2(n)}{n (\log\log(n))^2}$. Combining this with the above approximation for $\ex[P]$ we see that
\begin{align*}
    \E[P] &\leq \Bigg(1+\sum_{k=2}^n k!\binom{n}{k}\bigg(\frac{1}{n^{(1+o(1)3\sqrt{1+\epsilon}}}\bigg)^{k-1}\Bigg)^n \\
    &\leq \Bigg(1+\sum_{k=2}^n n^k\bigg(\frac{1}{n^{(1+o(1))2\sqrt{1+\epsilon}}}\bigg)^{k-1}\Bigg)^n \\
    &= \Bigg(1+n\sum_{k=1}^{n-1} \bigg(\frac{1}{n^{(1+o(1))2\sqrt{1+\epsilon}}}\bigg)^k\Bigg)^n \\
    &\leq \Bigg(1+n\left(\frac{1}{1-n^{-(1+o(1))2\sqrt{1+\epsilon}}}-1\right)\Bigg)^n \\
    &\leq \exp\left\{ \frac{n^2}{n^{(1+o(1))2\sqrt{1+\epsilon}}} \right\} \to 1,
\end{align*}
as $n \to \infty$. Hence $\ex[P] \to 1$ as $n \to \infty$ as there is always at least one valid permutation (the original ordering before shredding). 

\end{proof}

\section{Unique Reconstructibility}

A common problem of interest in most reconstruction models is that of finding which parameters $p = p(n)$ are such that reconstructibility of the structure being studied is guaranteed with high probability. Our algorithm gives an upper bound of $\frac{16\log^2(n)}{n (\log\log(n))^2}$ for the critical value at which reconstructibility can be ensured, though with the first moment method approach we can improve that bound.

\begin{theorem}\label{thresh}
    Let $M$ be an $n \times n$ random binary matrix with i.i.d.\ entries $m_{ij}$ with $\prob(m_{ij} = 1) = p$ and $\prob(m_{ij} = 0) = 1-p$. Then, for any $\epsilon > 0$, $\Prob(M \text{ is reconstructible}) \to 1$ as $n \to \infty$ for $p \geq \frac{2(1+\epsilon)\log(n)}{n}$.    
\end{theorem}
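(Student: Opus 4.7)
The plan is to apply the first moment method to the existence of a non-trivial permutation pair $(\sigma, \tau)$ yielding an alternative reconstruction. By Lemma \ref{duplicate}, for $p \geq (1+\epsilon)\log(n)/n$ all rows and columns of $M$ are pairwise distinct with probability tending to $1$; on this event, reconstructibility fails exactly when there is some $(\sigma, \tau) \neq (e, e)$ in $S_n \times S_n$ with $M_{\sigma(i), j} = M_{i, \tau(j)}$ for all $(i, j)$. A union bound then reduces the goal to showing
\[
\sum_{(\sigma, \tau) \neq (e, e)} \Prob\bigl(M_{\sigma(i), j} = M_{i, \tau(j)} \text{ for all } i, j\bigr) \longrightarrow 0.
\]

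To compute a single summand, the identities force $M$ to be constant on each orbit of the map $U(i, j) = (\sigma^{-1}(i), \tau(j))$ acting on $[n]^2$. Writing the cycle lengths of $\sigma$ as $a_1, \ldots, a_{c_\sigma}$ and those of $\tau$ as $b_1, \ldots, b_{c_\tau}$, the block indexed by cycle $r$ of $\sigma$ and cycle $s$ of $\tau$ contains $\gcd(a_r, b_s)$ orbits of size $\lcm(a_r, b_s)$ each, giving
\[
\Prob(\sigma M = M\tau) = \prod_{r, s} \bigl(p^{\lcm(a_r, b_s)} + (1-p)^{\lcm(a_r, b_s)}\bigr)^{\gcd(a_r, b_s)} \leq (1-p)^{K(\sigma, \tau)},
\]
with $K(\sigma, \tau) := n^2 - \sum_{r, s} \gcd(a_r, b_s)$, using $(1-p)^k + p^k \leq (1-p)^{k-1}$ for $k \geq 2, \ p \leq 1/2$.

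First I would handle the one-sided case, where exactly one of $\sigma, \tau$ is the identity, via an exact generating-function computation. For $\tau = e$ one has $\sum_{r, s} \gcd(a_r, 1) = n c_\sigma$, so $K(\sigma, e) = n(n - c_\sigma)$, and the classical identity $\sum_{\sigma \in S_n} z^{c(\sigma)} = z(z+1)\cdots(z+n-1)$ yields
\[
\sum_{\sigma \in S_n}(1-p)^{n(n - c_\sigma)} = \prod_{i=0}^{n-1}\bigl(1 + i(1-p)^n\bigr) \leq \exp\!\bigl(\tfrac{1}{2} n^2 (1-p)^n\bigr).
\]
Since $(1-p)^n \leq e^{-pn} \leq n^{-2(1+\epsilon)}$, subtracting the $\sigma = e$ term gives a total contribution of order $n^2(1-p)^n = O(n^{-2\epsilon})$, which tends to $0$; the symmetric case $\sigma = e, \tau \neq e$ is identical.

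The main obstacle is the two-sided case $\sigma, \tau \neq e$. A direct orbit count gives the structural inequality $K(\sigma, \tau) \geq \tfrac{1}{2}\bigl(n(m_\sigma + m_\tau) - m_\sigma m_\tau\bigr)$, where $m_\sigma = |\operatorname{supp}(\sigma)|$, attained with equality when both $\sigma$ and $\tau$ are products of disjoint transpositions on nested supports; any cycle of length $\geq 3$ strictly improves the bound. My plan is to partition the two-sided sum by the pair $(s, t) = (m_\sigma, m_\tau)$ and show that the dominant contribution comes from the extremal matching-against-matching case, where the number of such pairs is at most $\binom{n}{s}(s-1)!! \cdot \binom{n}{t}(t-1)!!$. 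In the bounded-support regime the per-term contribution is at most $n^{-\epsilon(s+t)+o(1)}$ and sums geometrically to $o(1)$; in the growing-support regime the super-exponential $((s/2)!(t/2)!)^{-1}$ suppression of the matching count combined with $(1-p)^K$ tames the polynomial growth, with even the extreme endpoint $s = t = n$ yielding a contribution of order $(n/e)^n \cdot n^{-n(1+\epsilon)} = e^{-n} n^{-n\epsilon} \to 0$. Non-matching cycle types contribute strictly less, since the increase in $K$ outpaces the growth of the permutation count. Combining Cases A and B with Lemma \ref{duplicate} completes the argument.
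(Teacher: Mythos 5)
Your skeleton (first moment over pairs $(\sigma,\tau)\neq(\id,\id)$, the orbit decomposition into $\gcd(a_r,b_s)$ cycles of length $\lcm(a_r,b_s)$, and Lemma \ref{duplicate} / an exact Stirling-number computation for the one-sided case) matches the paper, and your one-sided computation via $\sum_\sigma z^{c(\sigma)}=z(z+1)\cdots(z+n-1)$ is correct and clean. The divergence, and the problem, is in the two-sided case. Your bound $\Prob(M_{\sigma,\tau}=M)\leq(1-p)^{K}$ with $K=n^2-\sum_{r,s}\gcd(a_r,b_s)\geq\tfrac12\bigl(n(s+t)-st\bigr)$ is valid, but it discards half of the exponent: it only records that each nontrivial orbit has size at least $2$. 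The paper instead keeps the full factor $(1-p)^{\lcm\cdot\gcd}=(1-p)^{a_rb_s}$ per block and absorbs the $p^{\lcm}$ term into a multiplicative correction $\bigl(1+(\tfrac{p}{1-p})^{\lcm}\bigr)^{\gcd}\leq e^{O(p^2a_rb_s)}$, which yields an effective exponent of $n^2-c_1(\sigma)c_1(\tau)=n(s+t)-st$ --- twice yours. This factor of $2$ is exactly what makes the crude count-times-worst-term bound close at $p=\tfrac{2(1+\epsilon)\log n}{n}$: with the full exponent, (number of pairs with supports $s,t$) $\times$ (max probability) $\leq n^{s+t}\cdot n^{-2(1+\epsilon)(s+t-st/n)}\leq n^{-\epsilon(s+t)}$, whereas with your halved exponent the same computation gives $n^{(s+t)(1-\epsilon)/2}$, which diverges.

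Because of this, your plan is forced into the refined cycle-type analysis, and its load-bearing step --- ``non-matching cycle types contribute strictly less, since the increase in $K$ outpaces the growth of the permutation count'' --- is asserted, not proved. This is not a routine verification: the number of permutations of support $s$ with a given cycle type interpolates between the matching count $\approx n^s/(2^{s/2}(s/2)!)$ and the full $\approx n^s$, while $K$ interpolates between $\tfrac12(n(s+t)-st)$ and roughly $n(s+t)-st$, and you must show the trade-off is uniformly favourable over all pairs of cycle types with $s,t=\Theta(n)$ (the regime where the naive bound fails). That is a genuine missing argument, comparable in weight to the paper's Lemma \ref{final}. You could repair it either by carrying out that cycle-type sum, or --- more simply --- by strengthening your probability estimate to retain the full orbit sizes as the paper does (write $(1-p)^{\lcm}+p^{\lcm}\leq(1-p)^{\lcm}\exp\bigl((\tfrac{p}{1-p})^{\lcm}\bigr)$ rather than $(1-p)^{\lcm-1}$), after which the sum reduces to controlling $\ex\bigl[e^{-p(n^2-c_1(\sigma)c_1(\tau))}\bigr]$ using only the distribution of the number of fixed points of a uniform permutation, with no cycle-type bookkeeping needed.
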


The following lemma offers us a second equivalent definition for reconstructibility that is better for completing the computations in the proof of Theorem \ref{thresh}

\begin{lemma}\label{definition}
    Let $M$ be an $n \times n$ binary matrix with shredded column and row collections given by $\gamma_1,...,\gamma_n$ and $\rho_1,...,\rho_n$ respectively, and let $M_{\sigma,\tau}$ denote the matrix obtained from permuting the rows by $\sigma$ and the columns by $\tau$, $M_{\sigma,\tau} = (m_{\sigma(i),\tau(j)})_{i,j=1}^n$ for a particular pair $(\sigma,\tau) \in S_n^2 \setminus\{(\id,\id)\}$ (here $\id$ just means the identity permutation that sends each $i \in [n]$ to itself). Then,
    $$\{M \text{ is not reconstructible}\} = \bigcup_{\substack{(\sigma,\tau) \in S_n^2 \\ (\sigma,\tau) \ne (\id,\id)}} \{ M_{\sigma,\tau} = M \} . $$
\end{lemma}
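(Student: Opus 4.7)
The plan is to first reduce to a convenient normalization of the shredded collections, and then translate the reconstructibility condition into the lemma's form via a single index substitution.

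For the reduction: even though the shredded rows $\rho_1,\dots,\rho_n$ and columns $\gamma_1,\dots,\gamma_n$ are given in arbitrary orderings, say $\rho_i$ equals the $\pi(i)$-th row of $M$ and $\gamma_j$ equals the $\pi'(j)$-th column of $M$, the event $\{M \text{ is not reconstructible}\}$ does not depend on $\pi,\pi'$. Concretely, I would check that the map $(\sigma,\tau)\mapsto(\pi\circ\sigma,\pi'\circ\tau)$ is a bijection of $S_n^2$ onto itself that carries the set of valid reconstruction pairs associated to the ordering $(\pi,\pi')$ onto the set of valid reconstruction pairs associated to the identity ordering. This lets me assume without loss of generality that $\rho_i$ is literally the $i$-th row of $M$ and $\gamma_j$ is the $j$-th column of $M$, in which case $(\id,\id)$ is automatically a valid reconstruction.

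Under this normalization, unwinding (\ref{eq:recon}) entry by entry shows that $(\sigma,\tau)$ is a valid reconstruction iff $m_{\sigma(i),j}=m_{i,\tau(j)}$ for all $i,j\in[n]$. Non-reconstructibility then means there exists $(\sigma,\tau)\neq(\id,\id)$ with this property. Substituting $j\mapsto\tau^{-1}(j)$ rewrites the condition as $m_{\sigma(i),\tau^{-1}(j)}=m_{i,j}$, which is exactly $M_{\sigma,\tau^{-1}}=M$. Since $(\sigma,\tau)\mapsto(\sigma,\tau^{-1})$ is an involution of $S_n^2$ fixing $(\id,\id)$, the existence of a non-identity pair satisfying the reconstruction condition is equivalent to the existence of a non-identity pair with $M_{\sigma,\tau}=M$, which is precisely the union in the statement.

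The step I expect to require the most care is the first: verifying that relabelings of the shredded collections biject valid reconstructions onto valid reconstructions, so that reconstructibility really is a property of $M$ alone and not of the arbitrary orderings used to present its rows and columns. Once that normalization is firmly in place, the index substitution $\tau\leftrightarrow\tau^{-1}$ is routine bookkeeping.
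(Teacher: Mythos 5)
Your proof is correct and is essentially the paper's argument in normalized form: the paper composes two distinct valid pairs $(\sigma,\tau),(\sigma',\tau')$ into the automorphism pair $(\sigma'\circ\sigma^{-1},\tau\circ(\tau')^{-1})$, which with your normalization $(\sigma,\tau)=(\id,\id)$ reduces exactly to your bijection $(\sigma',\tau')\mapsto(\sigma',(\tau')^{-1})$. The explicit reduction showing reconstructibility is independent of the arbitrary shredded ordering is a nice touch of rigor the paper leaves implicit, but the underlying idea is the same.
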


\begin{proof}[Proof of Theorem \ref{thresh}] Define,
$$N = \sum_{(\sigma,\tau) \in (S_n\setminus\{\id\})^2} \1_{\{M_{\sigma,\tau} = M\}} . $$ 
A quick computation shows that $\E[N] = (n!-1)^2\Prob(M_{\sigma,\tau} = M)$, where $(\sigma,\tau)$ are independent and both uniform over $S_n \setminus \{\id\}$. Before bounding this expression, we need some further exploration of the events $\{M_{\sigma,\tau} = M\}$.

We define the permutation graph of a pair $\sigma,\tau \in S_n$ to be the directed graph on $[n]^2 = \{(i,j) : 1 \leq i,j \leq n\}$ where each vertex $(i,j)$ has an out-going edge pointing to $(\sigma(i), \tau(j)) = (\sigma_i,\tau_j)$. If $\sigma,\tau \in S_n$ have cyclic decompositions $\sigma = a_1 \cdots a_m$ and $\tau = b_1 \cdots b_k$, a particular pair of cycles $a_i$ and $b_j$ acts on exactly the $|a_i| \times |b_j|$ sub-matrix of $M$ that corresponds to the rows that $a_i$ acts on and the columns that $b_j$ acts on (here $|\cdot|$ denotes the length). In the permutation graph, this $|a_i| \times |b_j|$ sized region corresponds exactly to a collection of $\gcd(|a_i|,|b_j|)$ disjoint cycles, all of length $\lcm(|a_i|,|b_j|)$.  In order to have $M_{\sigma,\tau} = M$, it is necessary to have all equality between all entries in $M$ that exist within the same cycle in the permutation graph. That is,
\begin{equation}\label{containment}
\{M_{\sigma,\tau} = M\} \subseteq \bigcap_{i,j=1}^n\{m_{i,j} = m_{\sigma^\ell(i),\tau^\ell(j)} \text{ for all }\ell \in \N\} .     
\end{equation}
Since each of the cycles are disjoint, the events in the intersection are all independent. Using (\ref{containment}) along with our original expression for $\ex[N]$ we get that
\begin{align*}
\ex[N] &\leq (n!)^2\ex\left[ \prod_{(i,j) \in S} \left(p^{\lcm(|a_i|,|b_j|)}+(1-p)^{\lcm(|a_i|,|b_j|)}\right)^{\gcd(|a_i|,|b_j|)}\right] ,
\end{align*}
where $S = \{(i,j) : 1 \leq i \leq m, 1 \leq j \leq k, (|a_i|,|b_j|) \ne (1,1)\}$. If we let $c_1(\sigma)$ and $c_1(\tau)$ denote the number of singleton cycles in $\sigma$ and $\tau$, then we can factor out powers of $(1-p)$ and use the fact that $|a_i|\cdot|b_j| \geq 2$ for $(i,j) \in S$,
\begin{align*}
\ex[N] &\leq (n!)^2\ex\left[(1-p)^{n^2-c_1(\sigma)c_1(\tau)}\prod_{(i,j) \in S} \left(1 + \left(\frac{p}{1-p}\right)^{\lcm(|a_i|,|b_j|)}\right)^{\gcd(|a_i|,|b_j|)}\right] \\
&\leq (n!)^2\ex\left[ (1-p)^{n^2-c_1(\sigma)c_1(\tau)}\exp\left\{\sum_{(i,j) \in S}\left(\frac{p}{1-p}\right)^{2}\right\}\right] \\
&\leq (n!)^2\ex\left[e^{-pn^2+pc_1(\sigma)c_1(\tau)}e^{4(n^2-c_1(\sigma)c_1(\tau))p^2}\right] .
\end{align*}
By bounding the expected value in the final upper bound, one can show that $\ex[N] \to 0$ as $n \to \infty$ for
$$\frac{(2+\epsilon)\log(n)}{n} \leq p \leq \frac{17\log^2(n)}{n(\log\log(n))^2},$$
which is sufficient because Theorem 1 covers the case where $p \geq \frac{16(1+\epsilon)\log^2(n)}{n(\log\log(n))^2}$ for any $\epsilon > 0$. Applying Lemma \ref{definition} with the union bound we see that
$$\prob(M \text{ is reconstructible}) \leq \ex[N] + \prob\left( 
\bigcup_{\substack{(\sigma,\tau) \in S_n^2\setminus(\id,\id) \\ \sigma = \id \text{ or } \tau = \id}} \{M_{\sigma,\tau} = M\} \right) .$$
However, if one of $\sigma$ or $\tau$ are the identity there must be at least two rows or columns that are identical in $M$ as the other cannot be the identity. Thus by Lemma \ref{duplicate}
$$\prob\left( 
\bigcup_{\substack{(\sigma,\tau) \in S_n^2\setminus(\id,\id) \\ \sigma = \id \text{ or } \tau = \id}} \{M_{\sigma,\tau} = M\} \right) \to 0 \ \text{as $n \to \infty$ for $p \geq \frac{2(1+\epsilon)\log(n)}{n}$} .$$
Combining this with the above we obtain that $\prob(M \text{ is not reconstructible}) \to 0$ as  $n \to \infty$ for $p \geq \frac{2(1+\epsilon)\log(n)}{n}$.
\end{proof}

\section{Proofs of Lemmas}

\setcounter{theorem}{1}

\begin{lemma}
Suppose that $p=p(n)$ is some sequence such that $np \geq 16$. There exists a positive constant $\gamma > 0$ such that $b_{n,p,\lfloor np \rfloor +i} \geq 2\gamma\frac{1}{\sqrt{np}}$ for all $i \in [0,\lfloor\sqrt{np}\rfloor]$. Furthermore,
$$\Prob\left(|\mathcal{P}_{\lfloor np \rfloor+i}| \leq \gamma\sqrt{\frac{n}{p}}\right) \leq e^{-\frac{1}{6}\gamma \sqrt{\frac{n}{p}}} , $$
where $\mathcal{P}_i = \{1 \leq j \leq n : |\gamma_j| = i\}$ and $\gamma_1,...,\gamma_n$ are the shredded columns.
\end{lemma}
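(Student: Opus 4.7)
The plan is to prove the two claims separately: part (i), the pointwise lower bound on $b_{n,p,k}$, follows from Stirling's formula; part (ii), the concentration bound on $|\mathcal{P}_k|$, is then a direct Chernoff argument using part (i).

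For part (i), I would write
\[ b_{n,p,k} = \binom{n}{k} p^k (1-p)^{n-k} \]
and use Stirling's formula $n! = \sqrt{2\pi n}(n/e)^n (1 + O(1/n))$ on each factorial to obtain the standard local representation
\[ b_{n,p,k} = \frac{1+o(1)}{\sqrt{2\pi n (k/n)(1-k/n)}} \exp\bigl\{-n D(k/n \,\|\, p)\bigr\}, \]
uniformly for $k$ bounded away from $0$ and $n$, where $D(q\|p) = q \log(q/p) + (1-q)\log((1-q)/(1-p))$. For $k = \lfloor np \rfloor + i$ with $0 \leq i \leq \lfloor\sqrt{np}\rfloor$ and $p \leq 1/2$, a Taylor expansion around $k = np$ gives $n D(k/n \| p) = (k-np)^2/(2np(1-p)) + O(|k-np|^3/(np)^2)$. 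In the stated range, $(k-np)^2 \leq np(1+o(1))$ and $p(1-p) \geq p/2$, so the leading term is bounded by $1+o(1)$, while the cubic correction is $O(1/\sqrt{np}) = o(1)$. Since $np \geq 16$, $p \leq 1/2$, and $k \in [np, 2np]$, the prefactor is $\Theta(1/\sqrt{np})$. This yields $b_{n,p,k} \geq c/\sqrt{np}$ for some absolute $c>0$, and we set $\gamma := c/2$.

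For part (ii), note that because the matrix entries are independent, the column weights $|\gamma_1|,\dots,|\gamma_n|$ are i.i.d.\ with distribution $\bin(n,p)$, so $|\mathcal{P}_k|$ is distributed as $\bin(n, b_{n,p,k})$. Write $q := b_{n,p,k}$, so part (i) gives $nq \geq 2\gamma\sqrt{n/p}$. The standard multiplicative Chernoff lower-tail bound with $\delta = 1/2$ yields
\[ \Prob\bigl(\bin(n,q) \leq \tfrac{1}{2}nq\bigr) \leq \exp\{-nq/8\}. \]
Since $\gamma\sqrt{n/p} \leq nq/2$, we conclude
\[ \Prob\bigl(|\mathcal{P}_{\lfloor np\rfloor + i}| \leq \gamma\sqrt{n/p}\bigr) \leq \exp\{-nq/8\} \leq \exp\{-\gamma\sqrt{n/p}/4\} \leq \exp\{-\gamma\sqrt{n/p}/6\}, \]
which is the required bound.

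The only genuinely delicate point is part (i): one must verify that the Stirling expansion and the Taylor expansion of $D(k/n \| p)$ are uniformly controlled over the whole strip $0 \leq i \leq \lfloor\sqrt{np}\rfloor$ and the whole range of admissible $p$. Once one observes that $p \leq 1/2$ keeps $k$ and $n-k$ both comparable to $np$ and $n$ respectively (using $np \geq 16$), the estimates become routine and the constant $\gamma$ can be chosen independently of $n$ and $p$. Part (ii) is then completely standard; the factor of $1/6$ versus $1/4$ in the exponent is simply slack built into the statement.
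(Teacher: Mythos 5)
Your proof follows essentially the same route as the paper's: a Stirling-type lower bound on $b_{n,p,k}$ for $k$ within $\sqrt{np}$ of the mean, followed by the observation that $|\mathcal{P}_k| \sim \bin(n, b_{n,p,k})$ and a multiplicative Chernoff lower-tail bound. The only difference is cosmetic: the paper carries out the Stirling estimate with explicit non-asymptotic factor-by-factor bounds (producing the explicit constant $\gamma = \tfrac{1}{32e^2\sqrt{3\pi}}$), whereas you use the local-limit/KL-divergence form with $1+o(1)$ error terms that, as you yourself note, must still be made uniform over all $n,p$ with $np \geq 16$ and $p \leq 1/2$ --- a routine but necessary piece of bookkeeping to actually extract a single constant $\gamma$.
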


\begin{proof}
Since $|\mathcal{P}_k| = \sum_{i=1}^n \1_{\{\text{column $i$ has weight $k$}\}}$, and each column has weight $k$ with probability $b_{n,p,k} = \binom{n}{k}p^k(1-p)^{n-k}$, it holds that $|\mathcal{P}_k| \sim \bin(n,b_{n,p,k})$. By a Chernoff bound we obtain,
$$\Prob\left(|\mathcal{P}_k| \leq \frac{1}{2}nb_{n,p,k} \right) \leq e^{-\frac{1}{12}nb_{n,p,k}} . $$
From here it suffices to show that there is a constant $\gamma > 0$ such that $\frac{1}{2}nb_{n,p,k} \geq \gamma\sqrt{\frac{n}{p}}$ when $k = \lfloor np \rfloor + i$, $i \in [0 , \lfloor \sqrt{np} \rfloor]$. To do this we show the following: for any $0 \leq x \leq \sqrt{np}$ such that $np+x$ is integer-valued, $b_{n,p,np+x} \geq \frac{\alpha}{\sqrt{np}}$ for some $\alpha > 0$. Repeatedly apply Stirling's bounds
$$\sqrt{2\pi n}\left(\frac{n}{e}\right)^n \leq n! \leq e\sqrt{2\pi n}\left(\frac{n}{e}\right)^n , $$
to yield $b_{n,p,np+x} \geq e^{-2} A_1A_2A_3$, where
$$A_1 = \frac{1}{\left(1+\frac{x}{np}\right)^{np}\left(1-\frac{x}{n(1-p)}\right)^{n(1-p)}}, \ A_2 = \left(\frac{1-\frac{x}{n(1-p)}}{1+\frac{x}{np}}\right)^x, A_3 = \frac{1}{\sqrt{2\pi}}\sqrt{\frac{n}{(np+x)(n(1-p)-x)}} . $$
Using the fact that $1+y \leq e^y$ for all $y \in \R$, we get
$$A_1 \geq \frac{1}{e^{x}e^{-x}} = 1 .$$
Next, since $p \geq \frac{16}{n}$,
$$A_2 \geq \left( \left(1-\frac{x}{n(1-p)}\right)\left(1-\frac{x}{np}\right)\right)^x \geq \left(1-\frac{x}{np(1-p)}\right)^x \geq \left(1-\frac{2}{\sqrt{np}}\right)^{\sqrt{np}} \geq \frac{1}{2^4} = \frac{1}{16} .$$
Finally, using again the fact that $\frac{1}{2} \geq p \geq \frac{16}{n}$,
\begin{align*}
A_3 &\geq \frac{1}{\sqrt{2\pi n p(1-p)}} \frac{1}{\sqrt{(1+\frac{x}{np})(1-\frac{x}{n(1-p)})}} \\
&\geq \frac{1}{\sqrt{2\pi n p(1+\frac{x}{np(1-p)}})} \\
&\geq \frac{1}{\sqrt{2\pi np (1 + \frac{2}{\sqrt{np}})}} \\
&\geq \frac{1}{\sqrt{3\pi np}}.    
\end{align*}
Putting everything together we get that
$$b_{n,p,np+x} \geq \left(\frac{1}{16e^2\sqrt{3\pi}}\right)\frac{1}{\sqrt{np}}.$$
\end{proof}

\begin{lemma}
Let $M$ be an $n \times n$ random binary matrix with i.i.d.\ entries $m_{ij}$ such that $\prob(m_{ij} = 1) = p$ and $\prob(m_{ij} = 0) = 1-p$. Then, for any $\epsilon > 0$, $\prob(M \text{ has at least two equal rows or columns}) \to 0$ as $n \to \infty$ if $p \geq \frac{(1+\epsilon)\log(n)}{n}$.
\end{lemma}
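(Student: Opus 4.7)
The plan is a straightforward union-bound / first-moment argument. I would first compute $\Prob(\rho_i = \rho_j)$ for two fixed distinct rows: since the pair of entries $(m_{i,k}, m_{j,k})$ is independent across columns $k$ and agrees with probability $p^2 + (1-p)^2 = 1 - 2p(1-p)$, we obtain
\[
\Prob(\rho_i = \rho_j) = \bigl(1-2p(1-p)\bigr)^n \leq e^{-2np(1-p)},
\]
using $1+x \leq e^x$. By the obvious row/column symmetry, the same bound applies to any pair of distinct columns. A union bound over the $\binom{n}{2}$ pairs of rows and $\binom{n}{2}$ pairs of columns then yields
\[
\Prob(M \text{ has two equal rows or columns}) \leq 2\binom{n}{2}\,e^{-2np(1-p)} \leq \exp\bigl\{2\log n - 2np(1-p)\bigr\}.
\]

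Next, I would check that the exponent diverges to $-\infty$ under the hypothesis $p \geq (1+\epsilon)\log(n)/n$. Adopting the paper's convention $p \leq 1/2$ (the case $p > 1/2$ reduces to this via the $0 \leftrightarrow 1$ relabeling, which preserves equalities between rows and between columns while sending $p$ to $1-p$), the map $p \mapsto p(1-p)$ is increasing on $[0,1/2]$, so the minimum of $np(1-p)$ on the admissible range is attained at the threshold $p = (1+\epsilon)\log(n)/n$. There,
\[
np(1-p) = (1+\epsilon)\log(n)\bigl(1-(1+\epsilon)\log(n)/n\bigr) = (1+\epsilon)\log(n) - o(1),
\]
so $2\log n - 2np(1-p) \leq -2\epsilon\log(n) + o(1) \to -\infty$, which closes the argument.

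There is no genuine obstacle here; the result is essentially a textbook sharp-threshold calculation. The only mildly delicate points are the monotonicity observation that the threshold value of $p$ is the worst case and the negligibility of the lower-order $-p^2$ correction in $p(1-p)$, which is justified because $(\log n)^2 / n = o(1)$. The extra factor of $2$ in the exponent (from $2p(1-p)$ rather than $p(1-p)$) is exactly what makes the bound work right at $p = (1+\epsilon)\log(n)/n$ instead of requiring $p \geq (2+\epsilon)\log(n)/n$.
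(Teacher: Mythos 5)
Your proposal is correct and follows essentially the same route as the paper: a union bound over pairs of rows (and, by symmetry, columns), the agreement probability $(p^2+(1-p)^2)^n = (1-2p(1-p))^n$, and an exponential bound on the result. Your explicit monotonicity check that the threshold $p=(1+\epsilon)\log(n)/n$ is the worst case over the admissible range is a welcome bit of extra care that the paper's one-line exponent bound elides, but it does not change the nature of the argument.
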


\begin{proof}
Let $r_1,...,r_n$ be the rows of $M$, $A_{i,j} = \{r_i =r_j\}$, and let $N = \sum_{i \ne j} \1_{A_{i,j}}$. Then,
\begin{align*}
\E[N] &= \binom{n}{2}\Prob(A_{1,2}) = \binom{n}{2}(p^2+(1-p)^2)^n \\
&= \binom{n}{2}(1-p)^{2n}\left(1 + \frac{p^2}{(1-p)^2}\right)^n \\
&\leq \binom{n}{2}e^{-2np+4np^2} \to 0 \quad \text{as $n \to \infty$},
\end{align*}
when $p \geq (1+\epsilon)\frac{\log(n)}{n}$. Since the columns have the same distribution as the rows, the result follows by Markov's inequality.
\end{proof}

\setcounter{theorem}{4}

\begin{lemma}
    Let $M$ be an $n \times n$ binary matrix with shredded column and row collections given by $\gamma_1,...,\gamma_n$ and $\rho_1,...,\rho_n$ respectively, and let $M_{\sigma,\tau}$ denote the matrix obtained from permuting the rows by $\sigma$ and the columns by $\tau$, $M_{\sigma,\tau} = (m_{\sigma(i),\tau(j)})_{i,j=1}^n$ for a particular pair $(\sigma,\tau) \in S_n^2 \setminus\{(\id,\id)\}$ (here $\id$ just means the identity permutation that sends each $i \in [n]$ to itself). Then,
    \begin{equation}\label{deflem}
    \{M \text{ is not reconstructible}\} = \bigcup_{\substack{(\sigma,\tau) \in S_n^2 \\ (\sigma,\tau) \ne (\id,\id)}} \{ M_{\sigma,\tau} = M \} .         
    \end{equation}
\end{lemma}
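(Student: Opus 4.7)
The plan is to prove both inclusions in (\ref{deflem}) at once by rewriting the reconstruction equation (\ref{eq:recon}) in coordinates and matching it to the symmetry condition $M_{\sigma,\tau}=M$.

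First I would reduce to a convenient ordering of the shredded collections. Reconstructibility of $M$ depends only on the multisets $\mathcal{R}(M)$ and $\mathcal{C}(M)$, not on any particular ordering of the $\rho_i$ and $\gamma_j$: replacing those orderings by others merely composes the set of valid pairs $(\sigma,\tau)$ with fixed permutations on either side, preserving its cardinality and, in particular, whether that cardinality exceeds one. Hence without loss of generality I may assume that $\rho_i$ is the $i$-th row of $M$ and $\gamma_j$ is the $j$-th column of $M$, so that $(\id,\id)$ is itself a valid reconstruction pair.

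Under this normalization, the matrix on the left of (\ref{eq:recon}) has $(i,j)$-entry $m_{\sigma(i),j}$, while the matrix on the right has $(i,j)$-entry $m_{i,\tau(j)}$. Thus $(\sigma,\tau) \in S_n^2$ satisfies (\ref{eq:recon}) if and only if $m_{\sigma(i),j} = m_{i,\tau(j)}$ for all $i,j \in [n]$, and $M$ fails to be reconstructible precisely when some $(\sigma,\tau) \ne (\id,\id)$ satisfies this system.

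Finally, the substitution $j \mapsto \tau^{-1}(j)$, which is a bijection on $[n]$, turns the system $m_{\sigma(i),j} = m_{i,\tau(j)}$ into the equivalent system $m_{\sigma(i),\tau^{-1}(j)} = m_{i,j}$ for all $i,j$, i.e., $M_{\sigma,\tau^{-1}} = M$. The map $(\sigma,\tau) \mapsto (\sigma,\tau^{-1})$ is an involution on $S_n^2$ that fixes $(\id,\id)$, so it restricts to a bijection of $S_n^2 \setminus \{(\id,\id)\}$ with itself. Under this bijection the existence of a non-identity pair satisfying (\ref{eq:recon}) is equivalent to the existence of a non-identity pair $(\sigma',\tau')$ with $M_{\sigma',\tau'} = M$, which is exactly the identity (\ref{deflem}). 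The only step requiring any real care is the initial WLOG relabeling; everything else is an entry-by-entry translation with no combinatorial obstacle.
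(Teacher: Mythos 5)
Your proof is correct. It reaches the same conclusion as the paper but by a genuinely different (and more streamlined) route: the paper proves the two inclusions of (\ref{deflem}) separately, first composing two distinct valid pairs into the stabilizing pair $(\sigma'\circ\sigma^{-1},\,\tau\circ(\tau')^{-1})$ for the forward inclusion, and then applying a stabilizing pair to the correct ordering to manufacture a second valid ordering for the reverse inclusion; you instead normalize the shredded ordering so that $(\id,\id)$ is a valid pair, rewrite (\ref{eq:recon}) entrywise as $m_{\sigma(i),j}=m_{i,\tau(j)}$, and obtain both directions at once via the change of variables $j\mapsto\tau^{-1}(j)$ and the involution $(\sigma,\tau)\mapsto(\sigma,\tau^{-1})$ on $S_n^2\setminus\{(\id,\id)\}$. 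What your approach buys is a single equivalence chain that makes transparent why the solution set of (\ref{eq:recon}) is a coset of the stabilizer $\{(\sigma,\tau):M_{\sigma,\tau}=M\}$; what it costs is the need for the initial WLOG step, which you correctly justify (re-ordering the shredded collections composes all solution pairs with fixed permutations, a bijection on the solution set, while the right-hand event in (\ref{deflem}) depends only on $M$). The paper's version avoids the normalization by working with arbitrary orderings throughout, at the price of a slightly more laborious two-way argument. Both are complete; no gaps.
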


\begin{proof}
Suppose that $M$ is not reconstructible. Then, there exists two distinct pairs of permutations $(\sigma,\tau)$ and $(\sigma',\tau')$ that satisfy (\ref{eq:recon}). That is, there is some matrix $M'$ (possibly both equal to $M$) such that
\begin{align*}
M
= 
\begin{bmatrix}
    \rho_{\sigma_1} \\ \vdots \\ \rho_{\sigma_n}
\end{bmatrix}
= 
\begin{bmatrix}
\gamma_{\tau_1} \ \cdots \ \gamma_{\tau_n}    
\end{bmatrix}
, \text{ and }
M' =
\begin{bmatrix}
    \rho_{\sigma'_1} \\ \vdots \\ \rho_{\sigma'_n}
\end{bmatrix}
= 
\begin{bmatrix}
\gamma_{\tau'_1} \ \cdots \ \gamma_{\tau'_n}    
\end{bmatrix}
.
\end{align*}
Suppose that $r_1,...,r_n$ and $c_1,...,c_n$ are the rows and columns in their original, pre-shredding order. Applying $\sigma' \circ \sigma^{-1}$ to $(r_1,...,r_n)$ and $\tau \circ {(\tau{'})^{-1}}$ to $(c_1,...,c_n)$ must necessarily send $M$ back to itself: 
\begin{enumerate}
    \item Applying $\sigma^{-1}$ to the rows of $M$ yields the matrix associated with the shredded rows $R = [\rho_1 \cdots \rho_n]^T$;
    \item Applying $\sigma'$ to $R$ gives $[\rho_{\sigma'_1} \cdots \rho_{\sigma'_n}]^T = [\gamma_{\tau'_1} \cdots \gamma_{\tau'_n}] = M'$ by the above identity;
    \item Applying $(\tau')^{-1}$ to $M'$ brings us to the matrix associated with the shredded columns $C = [\gamma_1 \cdots \gamma_n]$;
    \item Finally, applying $\tau$ to $C$ brings us back to our original matrix $[\gamma_{\tau_1} \cdots \gamma_{\tau_n}] = M$.
\end{enumerate} 
Both of these two permutation pairs cannot be the identity because we assume the pairs are distinct, and so the inclusion $\subseteq$ holds in (\ref{deflem}).

For the other direction in (\ref{deflem}) we suppose we have $(\sigma,\tau) \in S_n^2 \setminus{(\id,\id)}$ such that $M_{\sigma,\tau} = M$. Then if we are given an arbitrary shredded ordering $\gamma_1,...,\gamma_n$ and $\rho_1,...,\rho_n$, we can by assumption always apply $(\sigma,\tau)$ to the correct ordering to obtain a new non-equal ordering that is valid. Hence, $M$ is not reconstructible.
\end{proof}

\begin{lemma}\label{binomial}
Suppose that $X \sim \bin(n,p)$ and that $np \geq 16$. Then,
$$\ex\left[\frac{1}{\sqrt{X \vee 1}}\right] \leq \frac{3}{\sqrt{np}} . $$
\end{lemma}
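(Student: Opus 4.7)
The plan is to split the expectation according to whether $X$ is near its mean or in the lower tail, and to handle the two pieces with a pointwise bound and a Chernoff bound respectively. Concretely, I would write
$$\E\left[\frac{1}{\sqrt{X\vee 1}}\right] = \E\left[\frac{1}{\sqrt{X\vee 1}}\,\1_{\{X > np/2\}}\right] + \E\left[\frac{1}{\sqrt{X\vee 1}}\,\1_{\{X \leq np/2\}}\right],$$
and bound each piece separately.

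For the first piece, on the event $\{X > np/2\}$ we trivially have $1/\sqrt{X\vee 1} \leq \sqrt{2/(np)}$, so this term contributes at most $\sqrt{2}/\sqrt{np}$. For the second piece, on $\{X \leq np/2\}$ I use the crude bound $1/\sqrt{X\vee 1} \leq 1$ and invoke the standard multiplicative Chernoff bound for the lower tail of a binomial, namely $\Prob(X \leq (1-\delta)np) \leq e^{-np\delta^2/2}$ with $\delta = 1/2$, yielding $\Prob(X \leq np/2) \leq e^{-np/8}$.

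To finish, I need to convert $e^{-np/8}$ into something of order $1/\sqrt{np}$. The function $f(x) = \sqrt{x}\,e^{-x/8}$ has derivative of the sign of $1/(2\sqrt{x}) - \sqrt{x}/8 = (4-x)/(8\sqrt{x})$, so $f$ is decreasing for $x \geq 4$, and in particular for $x \geq 16$. Therefore $\sqrt{np}\cdot e^{-np/8} \leq f(16) = 4e^{-2} < 1$ whenever $np \geq 16$, which gives $e^{-np/8} \leq 1/\sqrt{np}$. Combining the two pieces yields
$$\E\left[\frac{1}{\sqrt{X\vee 1}}\right] \leq \frac{\sqrt{2}}{\sqrt{np}} + \frac{1}{\sqrt{np}} \leq \frac{3}{\sqrt{np}},$$
as required.

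There is no real obstacle here; the only delicate point is verifying that the constant in the Chernoff bound is comfortably small relative to $1/\sqrt{np}$ at the boundary $np = 16$, which amounts to checking that $4e^{-2} < 1$ and that $f(x) = \sqrt{x}\,e^{-x/8}$ is monotone decreasing on $[16,\infty)$. Everything else is a routine pointwise estimate.
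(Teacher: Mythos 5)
Your proof is correct and follows essentially the same route as the paper: split at the threshold $X \leq np/2$, bound the upper piece pointwise by $\sqrt{2/(np)}$, and control the lower-tail probability by $1/\sqrt{np}$. The only difference is that you use a multiplicative Chernoff bound (with the explicit check that $\sqrt{x}\,e^{-x/8} \leq 4e^{-2} < 1$ for $x \geq 16$) where the paper uses the Chebyshev--Cantelli inequality, and both yield the same final estimate.
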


\begin{proof}
Splitting the expectation into two pieces and then applying the Chebyshev-Cantelli inequality gives us the upper bound
\begin{align*}
    \ex\left[ \frac{1}{\sqrt{X \vee 1}} \right] \leq \sqrt{\frac{2}{np}} + \Prob\left( X \leq \frac{np}{2} \right) \leq \sqrt{\frac{2}{np}} + \frac{np(1-p)}{np(1-p)+(\frac{np}{2})^2} .
\end{align*}
Utilizing the fact that $np \geq 16$ we can see that
$$\frac{np(1-p)}{np(1-p)+(\frac{np}{2})^2} \leq \frac{4}{4+np} \leq \frac{1}{\sqrt{np}} , $$
which combined with the above gives
$$\ex\left[\frac{1}{\sqrt{X \vee 1}}\right] \leq \frac{\sqrt{2}}{\sqrt{np}} + \frac{1}{\sqrt{np}} \leq \frac{3}{\sqrt{np}}.$$
\end{proof}

\begin{lemma}\label{final}
Let $\sigma,\tau$ be independent uniform permutations over $S_n \setminus \{\id\}$, and let $c_1(\sigma),c_1(\tau)$ be the number of singleton cycles in both $\sigma$ and $\tau$ respectively. Then, for any $\epsilon > 0$,
$$a_n := (n!)^2\ex\left[e^{-pn^2+pc_1(\sigma)c_1(\tau)}e^{4(n^2-c_1(\sigma)c_1(\tau))p^2}\right] \to 0$$
as $n \to \infty$ for 
\begin{equation}\label{condition}
\frac{(2+\epsilon)\log(n)}{n} \leq p \leq \frac{17\log^2(n)}{n(\log\log(n))^2}.    
\end{equation}
\end{lemma}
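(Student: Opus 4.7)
The exponent appearing in the definition of $a_n$ factors neatly:
\begin{equation*}
-pn^2 + pc_1(\sigma)c_1(\tau) + 4(n^2 - c_1(\sigma)c_1(\tau))p^2 = -\alpha(n^2 - c_1(\sigma)c_1(\tau)),
\end{equation*}
where $\alpha := p(1-4p)$. Since $p \to 0$ throughout (\ref{condition}), we have $\alpha \sim p$. The plan is to expand the expectation as a double sum over the joint distribution of $(c_1(\sigma), c_1(\tau))$, then show the sum decays via a convex-quadratic bound on the resulting exponent.

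The distribution of $c_1(\sigma)$ for $\sigma$ uniform on $S_n \setminus \{\id\}$ is controlled by the standard count of permutations with $j$ prescribed fixed points and derangements of the rest: for $j \in \{0, 1, \dots, n-2\}$,
\begin{equation*}
\Prob(c_1(\sigma) = j) = \frac{\binom{n}{j} D_{n-j}}{n! - 1} \leq \frac{2}{j!},
\end{equation*}
and $\Prob(c_1(\sigma) \in \{n-1, n\}) = 0$. Substituting $\ell = n - j$, $m = n - k$ (so $\ell, m \geq 2$) and using $n!/(n-\ell)! \leq n^\ell$, independence of $\sigma, \tau$ gives
\begin{equation*}
a_n \leq 4 \sum_{\ell, m = 2}^n n^{\ell + m} e^{-\alpha(n(\ell+m) - \ell m)}.
\end{equation*}
Grouping by $s = \ell + m$ and using $\ell m \leq s^2/4$ yields
\begin{equation*}
a_n \leq 16 n^2 \max_{4 \leq s \leq 2n} e^{\phi(s)}, \qquad \phi(s) := s(\log n - \alpha n) + \alpha s^2/4.
\end{equation*}

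Since $\phi$ is a convex quadratic in $s$, its maximum over $[4, 2n]$ is attained at one of the endpoints. Under (\ref{condition}), a direct computation gives $\phi(4) \approx -4(1+\epsilon)\log n$ and $\phi(2n) = 2n\log n - \alpha n^2 \leq -\epsilon n \log n (1 + o(1))$, with $\phi(4)$ being the larger of the two for $n$ large. Therefore $e^{\phi(s)} \leq e^{\phi(4)} = O(n^{-4(1+\epsilon)})$ uniformly in $s$, giving $a_n = O(n^{-2-4\epsilon}) \to 0$.

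The main obstacle is the positive cross-term $+\alpha \ell m$ in the exponent, which threatens to inflate $e^{\phi(s)}$ for intermediate $s$ and spoil the decay. Convexity of $\phi$ is the clean way around this: the maximum over $[4, 2n]$ is forced to an endpoint, and at $s=4$ the exponent is only logarithmically negative while the combinatorial prefactor $n^s$ is correspondingly small. The essential bookkeeping step is verifying the endpoint comparison $\phi(4) > \phi(2n)$ uniformly over (\ref{condition})---in particular, confirming that even at the upper end $p \sim \log^2 n/(n(\log\log n)^2)$, $\alpha n$ grows fast enough that $\phi(2n)$ remains extremely negative relative to $\phi(4)$.
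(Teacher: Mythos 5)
Your proof is correct, and while the high-level strategy matches the paper's (expand the expectation over the joint law of the fixed-point counts, bound the point probabilities by $O(1/j!)$, substitute $\ell = n-j$, and control the troublesome cross term by an extremal argument), the execution differs in three worthwhile ways. First, your factorization of the exponent as $-\alpha(n^2 - c_1(\sigma)c_1(\tau))$ with $\alpha = p(1-4p)$ is cleaner than the paper's term-by-term bookkeeping and makes transparent that only $p \to 0$ is needed from the upper bound in (\ref{condition}). Second, you get $\Prob(c_1(\sigma)=j) \leq 2/j!$ exactly from the derangement count $\binom{n}{j}D_{n-j} \leq n!/j!$ and $n!-1 \geq n!/2$, whereas the paper invokes the asymptotic $\Prob(c_1 = x) \sim 1/x!$ with an external citation; your route also absorbs the $j=0$ terms uniformly instead of splitting them off. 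Third, for the cross term $\alpha\ell m$ the paper fixes $\ell$, exploits linearity in $k$ to push to endpoints, and then sums a geometric series over $\ell$; you instead apply $\ell m \leq (\ell+m)^2/4$ to reduce to a single convex quadratic $\phi(s)$ in $s = \ell+m$, whose maximum over $[4,2n]$ must sit at an endpoint, and pay only a polynomial factor for the number of terms. Both endgames are sound; yours trades the slightly sharper per-$\ell$ bound for a shorter, one-variable argument. The endpoint values check out: $\phi(4) \leq -(4+4\epsilon)(1+o(1))\log n$ since $\alpha n \geq (2+\epsilon)(1+o(1))\log n$, and $\phi(2n) = 2n\log n - \alpha n^2 \leq -\epsilon(1+o(1))n\log n$, so $n^2 e^{\phi(4)} \to 0$ as claimed.
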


\begin{proof}
First, we write the expression in the statement of the lemma as
\begin{equation*}
a_n = \sum_{0 \leq x,y \leq n-1} (n!)^2\exp\left\{-pn^2+pxy+4(n^2-xy)p^2\right\}\prob(c_1(\sigma)=x)\prob(c_1(\tau)=y) .   
\end{equation*}
We split off the terms with $xy = 0$ and upper bound by
\begin{equation*}
a_n \leq 2n(n!)^2\exp\left\{-pn^2+4n^2p^2\right\} + C\sum_{1 \leq x,y \leq n-1} \frac{(n!)^2}{x!y!}\exp\left\{-pn^2+pxy+4(n^2-xy)p^2\right\} ,
\end{equation*}
for some $C > 0$ such that $\prob(c_1(\sigma) = x) \leq  \sqrt{C}\frac{1}{x!}$. Such a $C$ exists because $\prob(c_1(\sigma') = x) \sim \frac{1}{x!}$ for $\sigma' \in S_n$ uniformly drawn (see \textcite{Arratia1992,Ford2022} for a discussion of random permutation statistics). One can see immediately that the first term tends to $0$ for $p$ in the described range, so all that is left is the second term. Relabelling $x = n-k$ and $y = n-\ell$ we can upper bound the sum by
\begin{equation}\label{finalsum}
C\sum_{1 \leq k,\ell \leq n-1} n^{k+\ell}\exp\left\{ -p(n(k+\ell)-k\ell)(1+o(1))\right\} \leq C\sum_{1 \leq \ell \leq n-1}\bigg(n\sup_{0 \leq k \leq n} f_\ell(k)\bigg),
\end{equation}
where $f_\ell(k) = e^{-((np-\log(n))(k+\ell)-pk\ell)(1+o(1))}$ with $k$ now being allowed to take on real values. To find $\max_{0 \leq k \leq n} f_\ell(k)$ it suffices to find $\min_{0 \leq k \leq n} ((np-\log(n))(k+\ell)-pk\ell) := \min_{0 \leq k \leq n} g_\ell(k)$. Since $g_\ell(k)$ is linear in $k$ it is monotone, and so
\begin{align*}
\min_{0 \leq k \leq n} g_\ell(k) &= \min\bigg\{(np-\log(n))\ell,(n^2p-n\log(n)-\ell\log(n))\bigg\} \geq \min\bigg\{(1+\epsilon)\log(n)\ell,\epsilon n \log(n)\bigg\}.
\end{align*}
For the above inequality we use the assumptions on $p$ from (\ref{condition}). Combining this bound with (\ref{finalsum}) gives 
$$a_n \leq C\sum_{1 \leq \ell \leq n-1} n^{-\epsilon \ell(1+o(1))} + C\sum_{1  \leq \ell \leq n-1} n^{-n\epsilon(1+o(1))-1} \leq C\sum_{1 \leq \ell \leq n-1} n^{-\epsilon \ell(1+o(1))} + Cn^{-n\epsilon(1+o(1))-2},$$
which converges to 0 as $n \to \infty$. Altogether, this proves that $a_n \to 0$ for $p$ in the desired range.
\end{proof}

\printbibliography

\end{document}